\renewenvironment{proof}[1][\proofname]{\par
  \pushQED{\qed}%
  \normalfont 
  \trivlist
  \item[\hskip 1.6em
        \bfseries
    #1]\hskip .5em\ignorespaces
}{%
  \endtrivlist\@endpefalse
} \makeatother
\newtheorem{thm}{Theorem}[section]
\newtheorem{prop}[thm]{Proposition}
\newtheorem{cor}[thm]{Corollary}
\newtheorem{rmk}[thm]{Remark}
\date{}
\begin{document}

\title{A stochastic maximum principle for backward delayed system via advanced stochastic differential equation (ASDE)
\thanks{The first author acknowledges the support from the Fundamental Research Funds for the Central
Universities (2010QS05), P. R. China. The first author also thanks Department of Applied Mathematics, The Hong Kong Polytechnic University for their hospitality during
her visit to Hong Kong. The second author acknowledges the support of RGC Earmarked grant 501010 and research fund of Hong Kong Polytechnic University (A-PL14).}
\author{Li Chen\footnote{Department of Mathematics, China University of Mining $\&$ Technology, Beijing 100083, China.
 Email address: chenli@cumtb.edu.cn.}\,\,\,\,\,
        Jianhui Huang\footnote{ Hong Kong Polytechnic University. Email address: majhuang@inet.polyu.edu.hk.}}}
\maketitle

\begin{abstract}The main contributions of this paper are three fold. First, our primary concern is to investigate a class of stochastic recursive delayed control problems which arise naturally with sound backgrounds but have not been well-studied yet. For illustration purpose, some concrete examples are also provided here. We derive the stochastic maximum principle of sufficient condition to the optimal control in both cases with and without control delay. Second, it is interesting that a new class of time-advanced stochastic differential equations (ASDEs) is introduced as the adjoint process via duality relation. To our best knowledge, such equations have never been discussed in literature although they possess more academic values besides the control study here. Some existence and uniqueness result to ASDEs is presented. Third, to illustrate our theoretical results, some dynamic optimization problems are discussed based on our stochastic maximum principles. In particular, the optimal controls are derived explicitly by solving the associated time-advanced ordinary differential equation (AODE), the counterpart of the ASDE in its deterministic setup.
\end{abstract}

\textbf{Key words:} Advanced stochastic differential equation (ASDE), Backward stochastic differential equation (BSDE), Maximum principle, Pension fund with delayed surplus, Stochastic recursive delayed control.

\section{Introduction}Our starting point is the following backward stochastic differential equation (BSDE) with time-delayed generator:\begin{equation}\label{MBD01}\left\{\begin{aligned}
-dy(t)=&\ f(t, y(t), y(t-\delta), z(t), z(t-\delta))dt-z(t)dW(t),\,\quad t\in[0,T],\\
y(T)=&\ \xi,\\
y(t)=&\ \varphi(t),\,z(t)=\psi(t),\,\quad t\in[-\delta,0).\end{aligned}\right.
\end{equation}Two remarkable features of Eq.(\ref{MBD01}): (i) The terminal instead initial condition is specified; (ii) The generator $f$ depends not only on the instantaneous state $(y(t), z(t))$ but also on $(y(t-\delta), z(t-\delta))$ through the time-delayed parameter $\delta>0.$ The feature (i) makes Eq.(\ref{MBD01}) essentially different to the well-studied stochastic delay differential equation (SDDE) (see e.g. Mohammed \cite{m1}, \cite{m2}, etc.) in which the initial state condition is given beforehand. Eq.(\ref{MBD01}) also differs from the standard BSDE due to its time-delayed generator from (ii). In particular, it distinguishes from the anticipated backward stochastic differential equation (ABSDE) introduced by Peng and Yang \cite{py} which is the duality of SDDE. Eq.(\ref{MBD01}) is first introduced by Delong and Imkeller \cite{di} and it has many real backgrounds in economics, finance, management, or other decision sciences. More details can be found in Delong \cite{d1}, \cite{d2}, Delong and Imkeller \cite{di} and the reference therein. Due to the interesting structure and wide-range applications, it is very natural and necessary to study the dynamic optimizations of Eq.(\ref{MBD01}). However, to our best knowledge, very few works have been done along this direction thus we aim to fill this research gap in some systematic way. To this end, we study the following more general controlled backward delayed system:\begin{equation}\label{MBD02}\left\{\begin{aligned}
-dy(t)=&\ f\left(t,y(t),\int_{t-\delta}^{t}\phi(t,s)y(s)\alpha(ds),z(t), \int_{t-\delta}^{t}\phi(t,s)z(s)\alpha(ds), v(t), \int_{t-\delta}^{t}\phi(t,s)v(s)\alpha(ds)\right)dt\\&-z(t)dW(t),\,\quad t\in[0,T],\\
y(T)=&\ \xi,\\
y(t)=&\ \varphi(t),\,z(t)=\psi(t),\,\quad t\in[-\delta,0).\end{aligned}\right.
\end{equation}Here, $\delta$ is time delay parameter, $\alpha$ is some $\sigma$-finite measure and $\phi(\cdot,\cdot)$ is some bounded process. The relevance and importance of our optimization problems can be illustrated by the following concrete examples.\vspace{0.2cm}

{\it Example 1.1} (\emph{Optimization of recursive utility with moving average}) This example originates from Delong \cite{d1} in which the decision makers have recursive utility with moving average generators. Such utility can be used to characterize the habit information, disappointment effects as well as volatility aversion in decision-making. Accordingly, the objective of decision maker is to maximum his/her utility by selecting suitable instantaneous consumption process $c(t)$. This leads to the following dynamic optimization problem$$\inf_{c(\cdot) \in \mathcal{U}_{ad}}y^{c}(0)$$where the recursive utility $y(t)$ satisfies the following BSDE with time-delayed generator\begin{equation}\label{ex1}\left\{\begin{aligned}
-dy(t)=&\ f\left(t, y(t), \frac{1}{t}\int_{0}^{t}y(s)ds, z(t), \frac{1}{t}\int_{0}^{t}z(s)ds, c(t)\right)dt\\&-z(t)dW(t),\,\quad t\in[0,T],\\
y(T)=&\ \xi,\\
y(t)=&\ \varphi(t),\,z(t)=\psi(t),\,\quad t\in[-\delta,0).\end{aligned}\right.
\end{equation}Eq.(\ref{ex1}) can be viewed as the special case of Eq.(\ref{MBD02}) by noting $\frac{1}{t}\int_{0}^{t}y(s)ds=\int^{t}_{t-T}y(s)\frac{T}{t}\chi_{\{s\geq0\}}\alpha(ds)$ where $\alpha$ is uniform measure on $[t-T,t]$. It can characterize the non-monotonic utility to volatility aversion.\vspace{0.2cm}

{\it Example 1.2} (\emph{Pension fund with delayed surplus}) This example comes from Federico \cite{f} where the pension fund manager can invest two assets: the riskless asset $P_0(t)$ satisfies $dP_0(t)=rP_0(t)dt$ with instantaneous return rate $r \geq 0,$ and the risky asset $P_1(t)$ satisfies\begin{equation*}dP_1(t)=\mu P_1(t)dt+\sigma P_1(t)dW(t),\end{equation*}with return rate $\mu \geq r,$ volatility rate $\sigma>0.$ Here, $W(\cdot)$ is a standard Brownian motion. Denote $\lambda=\frac{\mu-r}{\sigma}$ the risk premium, $\theta(t) \in[0,1] $ the proportion of fund invested in risky asset, and $S(t)$ the surplus premium to fund members. Suppose the wealth of pension fund at time $t$ is $y(t)$, and it is reasonable to assume $S(t)$ depends on the performance of fund growth during the past period. Thus, we assume:$$S(t)=g\left(y(t)-\kappa y(t-\delta)\right)$$for some $\kappa>0$ and $g:\mathbb{R}\rightarrow[0,+\infty)$ which is increasing, convex and Lipschitz continuous, $\delta>0$ is the time delay. On the other hand, there should be some running cost or consumption for fund management, which is represented by the instantaneous rate $c(t)$. Hence the wealth process $y(t)$ evolves as:\begin{equation}\label{MBD1}
\left\{\begin{aligned}
dy(t)=&\ \left([\theta(t)\sigma\lambda+r]y(t)-g(y(t)-\kappa y(t-\delta))-c(t)\right)dt+\sigma\theta(t)y(t)dW(t),\, \quad t\in[0,T],\\
y(0)=&\ y_0,\\
y(t)=&\ 0,\, \quad t\in[-\delta,0).\\
\end{aligned}\right.\end{equation}Note that in practice, the pension fund will be required to provide some minimum guarantee, i.e., to pay some part of the due benefits $\xi$ (which is some random variable) at some given future time $T$. Keep this in mind, the objective of fund manager is to choose $\theta(t)$ and $c(t)$ to reach terminal condition $y(T)=\xi$, and also maximize some given cost functional at the same time. By setting $z(t)=\sigma\theta(t)y(t)$, Eq.({\ref{MBD1}) can be reformulated by the following controlled backward delayed system:\begin{equation}\label{MBD2}
\left\{\begin{aligned}
dy(t)=&\ \{ry(t)+\lambda z(t)-g(y(t)-\kappa y(t-\delta))-c(t)\}dt+z(t)dW(t),\, \quad t\in[0,T],\\
y(t)=&\ 0,\, \quad t\in[-\delta,0),\\
y(T)=&\ \xi.
\end{aligned}\right.
\end{equation}Eq.(\ref{MBD2}) is a special case of (\ref{MBD02}) by setting $\alpha(ds)$ to be Dirac measure at $-\delta,$ the pointwise delay with lag $\delta.$\vspace{0.2cm}

{\it Example 1.3} It is remarkable that there exist considerably rich literature to discuss the controlled stochastic delay differential equations (SDDEs) (see e.g. \cite{cw}, \cite{l}, \cite{os} etc.) which arise naturally due to the time lag between the observation and regulator, or the possible aftereffect of control. The SDDEs and its optimization have attracted extensive research attention in last few decades, and have been applied in wide-range domains including physics, biology and engineering, etc. (see \cite{m1}, \cite{m2} for more details). Note that these works are discussed in the forward setup because the initial condition is given as the priori. On the other hand, as suggested by Kohlmann and Zhou \cite{kz}, Ma and Yong \cite{my}, the forward controlled systems can be reformulated into some backward controlled systems under mild conditions. For example, in case of some state constraints (e.g. no short selling), it is better to reformulate the controlled forward systems into some backward systems which are more convenient to be analyzed in some cases (see Ji and Zhou \cite{jz}, El. Karoui, Peng and Quenez \cite{kpq}). Also, inspired by Lim and Zhou \cite{lz}, we aim to investigate the following controlled linear backward delayed system:\begin{equation*}\left\{\begin{aligned}dy(t)=&\ \left(\beta_1 y(t)+\beta_2 y(t-\delta)+\gamma_1 z(t)+\gamma_2 z(t-\delta)+\alpha v(t)\right)dt+z(t)dW(t),\, \quad \quad t\in[0,T],\\
y(T)=&\ \xi,\, \quad \quad t\in[-\delta,0],\\\end{aligned}\right.\end{equation*}which can be viewed as the linear constrained forward controlled delay system by using penalty approach, or the limit of a family of linear unconstrained forward delayed system. \vspace{0.2cm}

The rest of this paper is organized as follows. In Section $2$, we introduce the advanced stochastic differential equation (ASDE). Some preliminary results on ASDE and the associated BSDE with delay generator are also given. The stochastic recursive delayed control problems are formulated in Section $3$, and two maximum principles are derived based on the duality between the ASDE and the BSDE with delayed generator. As the application of our theoretical results, in Section $4-6$ we revisit some motivating examples given in Section $1$ and the optimal controls are derived explicitly by solving the associated time-advanced ordinary differential equation (AODE).

\section{Notations and Preliminaries}
Let $T>0$ be some finite time horizon. For any Euclidean space H, we denote by $\langle\cdot,\cdot\rangle$ (resp. $|\cdot|$) the
scalar product (resp. norm) of H. Let $\mathbb{R}^{n\times m}$ be the Hilbert space
of all $n\times m$ matrices with the inner product$$\langle A,B\rangle:=tr\{AB^{\top}\},\,\,\,\,\forall A,B\in\mathbb{R}^{n\times m}.$$Here the superscript $\top$ denotes the transpose of vector or matrix. Let $W(\cdot)$ be a standard $d$-dimensional Brownian motion on a complete probability space $(\Omega, \mathcal{F}, \mathbb{P})$. The information structure is given by the filtration $\mathbb{F}=\{\mathcal{F}_t\}_{t\geq0}$ which is generated by $W(\cdot)$ and augmented by all $\mathbb{P}$-null sets. For $p\geq1$, the following notations are used
throughout this paper:$$\begin{array}{lll}
L^p(\Omega, \mathcal{F}_t, \mathbb{P};H)&:=&\{\xi\,\textrm{is H-valued}\,\mathcal{F}_t-\textrm{measurable random
variable satisfying}\,\,\mathbb{E}[\xi^p]<+\infty\};\\
L^p_{\mathbb{F}}(t_1,t_2;H)&:=&\{\varphi(t),\,t_1\leq t\leq
t_2,\,\textrm{is }\,\mathbb{F}-\textrm{adapted process satisfying}\, \,\mathbb{E}\int^{t_2}_{t_1}|\varphi(t)|^pdt<+\infty\},\\
L^{\infty}_{\mathbb{F}}(t_1,t_2;H)&:=&\{\varphi(t),\,t_1\leq t\leq
t_2,\, \textrm{is H-valued}\,
\mathbb{F}-\textrm{adapted bounded process}\}.\\
\end{array}$$We set $$y_{\delta}(t)=\int_{t-\delta}^{t}\phi(t,s)y(s)\alpha(ds),\quad \quad
z_{\delta}(t)=\int_{t-\delta}^{t}\phi(t,s)z(s)\alpha(ds).$$
Then the backward delayed system of form (\ref{MBD02}) can be rewritten as
\begin{equation}\label{MBD03}\left\{\begin{aligned}
-dy(t)=&\ f\left(t,y(t),y_{\delta}(t),z(t), z_{\delta}(t)\right)dt-z(t)dW(t),\,\quad t\in[0,T],\\
y(T)=&\ \xi,\\
y(t)=&\ \varphi(t),\,z(t)=\psi(t),\,\quad t\in[-\delta,0).\end{aligned}\right.
\end{equation}We introduce the following assumptions:

\textbf{(H2.1)} The function $f:\Omega\times[0,T]\times\mathbb{R}^n\times\mathbb{R}^n\times\mathbb{R}^{n\times d}\times\mathbb{R}^{n\times d}\rightarrow
\mathbb{R}^n$ is $\mathbb{F}$-adapted and satisfies$$\begin{aligned}
&|f(t,y,y_{\delta},z,z_{\delta})-f(t,y',y'_{\delta},z',z'_{\delta})| \leq \ C(|y-y'|+|y_{\delta}-y'_{\delta}|+|z-z'|+|z_{\delta}-z'_{\delta}|)
\end{aligned}$$for any $y,y_{\delta},y',y'_{\delta}\in\mathbb{R}^n$, $z,z_{\delta},z',z'_{\delta}\in\mathbb{R}^{n\times d}$ with constant $C>0.$

\textbf{(H2.2)} The fixed time delay satisfies $0 \leq \delta \leq T$, $\xi\in L^2(\Omega,\mathcal{F}_T,\mathbb{P};\mathbb{R}^n)$, the initial path of $(y,z)$: $\varphi(\cdot),\psi(\cdot)$ are given square-integrable functions and $\phi(t,s)\leq M$ is given bounded $\mathcal{F}_s-$adapted process with $0\leq s\leq t\leq T$ and $M$ is some positive constant.

\textbf{(H2.3)} $\mathbb{E}[\int^T_0|f(t,0,0,0,0)|^2dt]<+\infty$.\\

Then we have the following existence and uniqueness of the delayed BSDE (\ref{MBD02}):\begin{thm}\label{Thm1}
Suppose that (H2.1)-(H2.3) hold, then for sufficiently small time delay $\delta$, the BSDE with delay (\ref{MBD02}) has a unique adapted solution $(y(\cdot),z(\cdot))\in L^2_{\mathbb{F}}(-\delta,T;\mathbb{R}^n)\times L^2_{\mathbb{F}}(-\delta,T;\mathbb{R}^{n\times d}).$
\end{thm}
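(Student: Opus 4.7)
The plan is a Banach fixed point argument. Let $\mathcal{B}_0$ denote the closed subset of $L^2_{\mathbb{F}}(-\delta,T;\mathbb{R}^n)\times L^2_{\mathbb{F}}(-\delta,T;\mathbb{R}^{n\times d})$ consisting of pairs that coincide with $(\varphi,\psi)$ on $[-\delta,0)$, endowed with the weighted norm
$$\|(u,v)\|_\beta^2:=\mathbb{E}\int_0^T e^{\beta t}(|u(t)|^2+|v(t)|^2)\,dt$$
for some $\beta>0$ to be chosen. I would define $\Gamma:(u,v)\mapsto(y,z)$, where $(y,z)$ is the unique solution on $[0,T]$ of the standard (undelayed) BSDE
$$y(t)=\xi+\int_t^T f(s,u(s),u_\delta(s),v(s),v_\delta(s))\,ds-\int_t^T z(s)\,dW(s),$$
extended by $(\varphi,\psi)$ on $[-\delta,0)$. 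Assumptions (H2.1)--(H2.3) together with $|\phi|\leq M$ guarantee that the frozen driver is square integrable, so $\Gamma$ is well defined by the classical BSDE existence theory.

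Two complementary estimates must then be proved for differences of iterates. Applying It\^o's formula to $e^{\beta t}|\Delta y|^2$, combined with (H2.1) and Young's inequality, yields the standard weighted a priori estimate
$$\mathbb{E}\int_0^T e^{\beta t}(|\Delta y|^2+|\Delta z|^2)\,dt \leq \frac{C_1}{\beta}\,\mathbb{E}\int_0^T e^{\beta t}(|\Delta u|^2+|\Delta u_\delta|^2+|\Delta v|^2+|\Delta v_\delta|^2)\,dt,$$
with $C_1$ depending only on the Lipschitz constant of $f$. Next, applying Cauchy--Schwarz to $u_\delta(t)=\int_{t-\delta}^t \phi(t,s)u(s)\alpha(ds)$ together with $|\phi|\leq M$ and Fubini's theorem produces a bound of the form
$$\mathbb{E}\int_0^T e^{\beta t}|\Delta u_\delta(t)|^2\,dt \leq K(\delta,\beta)\,\|(\Delta u,\Delta v)\|_\beta^2,$$
where $K(\delta,\beta)$ is controlled by $M^2$, $\sup_t\alpha([t-\delta,t])$ and a factor $e^{\beta\delta}$; the analogous bound holds for $\Delta v_\delta$. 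Crucially, the constraint defining $\mathcal{B}_0$ forces $\Delta u\equiv\Delta v\equiv 0$ on $[-\delta,0)$, so the right-hand side is controlled purely by the $\beta$-norm on $[0,T]$.

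Combining these estimates gives
$$\|\Gamma(u_1,v_1)-\Gamma(u_2,v_2)\|_\beta^2 \leq \frac{C_1(1+K(\delta,\beta))}{\beta}\,\|(u_1-u_2,v_1-v_2)\|_\beta^2,$$
and it suffices to pick $\beta$ large and $\delta$ small, with $\beta\delta$ controlled, so that the prefactor lies strictly below $1$; Banach's fixed point theorem then delivers the unique solution of (\ref{MBD03}). The principal difficulty is precisely this interplay between $\beta$ and $\delta$: enlarging $\beta$ shrinks $C_1/\beta$ but inflates the delay factor through $e^{\beta\delta}$, so one cannot rely on the classical weighted-norm trick alone as for ordinary BSDEs. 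This is why a genuine smallness hypothesis on $\delta$ (rather than on the time horizon or Lipschitz constant alone) is imposed; in the pointwise-delay case where $\alpha$ is a Dirac mass, the calibration must be done with care, whereas when $\alpha$ is absolutely continuous an extra $\delta^2$ factor emerges from Fubini and absorbs the exponential growth more easily.
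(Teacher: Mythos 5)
Your proposal is correct and follows essentially the same route as the paper: a Banach fixed point argument in an exponentially weighted $L^2$ norm, with the standard BSDE a priori estimate giving a factor $C/\beta$ and a Cauchy--Schwarz/Fubini bound on the delayed terms producing a factor like $M^2\delta e^{\beta\delta}\alpha([-\delta,0])$, after which the paper resolves the $\beta$--$\delta$ tension exactly as you suggest by taking $\beta=1/\delta$ and then $\delta$ small. Your observation that the contraction constant should involve $\sup_t\alpha([t-\delta,t])$ rather than an implicit $\delta$ factor is, if anything, slightly more careful than the paper's own bookkeeping for general measures $\alpha$.
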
\begin{proof}Let us introduce the following norm in Banach space $L^2_{\mathbb{F}}(-\delta,T;\mathbb{R}^n)$ which is equivalent to the original norm of $L^2_{\mathbb{F}}(-\delta,T;\mathbb{R}^n)$:
$$\|\nu(\cdot)\|_{\beta}=\big(\mathbb{E}[\int^T_{-\delta}|\nu(s)|^2e^{\beta s}ds]\big)^{\frac{1}{2}}.$$Set
\begin{equation}\label{MBD3}
\left\{\begin{aligned}
y(t)=&\ \xi+\int^T_tf(s,Y(s),Y_{\delta}(s),Z(s),Z_{\delta}(s))ds-\int^T_tz(s)dW(s),\,t\in[0,T],\\
y(t)=&\ \varphi(t),\,z(t)=\psi(t),\,t\in[-\delta,0).
\end{aligned}\right.\end{equation}Define a mapping $h:L^2_{\mathbb{F}}(-\delta,T;\mathbb{R}^n\times\mathbb{R}^{n\times d})\longrightarrow \mathbb{R}^n\times\mathbb{R}^{n\times d}$ such that $h[(Y(\cdot),Z(\cdot))]=(y(\cdot),z(\cdot))$. So if we can prove that $h$ is a contraction mapping under the norm $\|\cdot\|_{\beta}$, then the desired result can be obtained by the fixed point theorem. For two arbitrary elements $(Y(\cdot),Z(\cdot))$ and $(Y'(\cdot),Z'(\cdot))$ in $L^2_{\mathbb{F}}(-\delta,T;\mathbb{R}^n\times\mathbb{R}^{n\times d})$, set $(y(\cdot),z(\cdot))=h[(Y(\cdot),Z(\cdot))]$ and $(y'(\cdot),z'(\cdot))=h[(Y'(\cdot),Z'(\cdot))]$. Denote their difference by$$(\hat{Y}(\cdot),\hat{Z}(\cdot))=(Y(\cdot)-Y'(\cdot),Z(\cdot)-Z'(\cdot)),\,(\hat{y}(\cdot),\hat{z}(\cdot))=(y(\cdot)-y'(\cdot),z(\cdot)-z'(\cdot)).$$In fact Eq. (\ref{MBD3}) is a classical BSDE, and it follows that\begin{equation*}
\begin{aligned}
&\mathbb{E}[\int^T_0(\frac{\beta}{2}|\hat{y}(s)|^2+|\hat{z}(s)|^2)e^{\beta s}ds]\\
\leq &\ \frac{2}{\beta}\mathbb{E}[\int^T_0|f(s,Y(s),Y_{\delta}(s),Z(s),Z_{\delta}(s))-f(s,Y'(s),Y'_{\delta}(s),Z'(s),
Z'_{\delta}(s))|^2e^{\beta s}ds]\\
\leq &\ \frac{2C^2}{\beta}\mathbb{E}[\int^T_0\big(|\hat{Y}(s)|+|\hat{Y}_{\delta}(s)|+|\hat{Z}(s)|+|\hat{Z}_{\delta}(s)|\big)^2e^{\beta s}ds]\\
\leq &\ \frac{6C^2}{\beta}\mathbb{E}[\int^T_0\big(|\hat{Y}(s)|^2+|\hat{Z}(s)|^2+2|\hat{Y}_{\delta}(s)|^2
+2|\hat{Z}_{\delta}(s)|^2\big)e^{\beta s}ds]\\
\leq &\ \frac{6C^2}{\beta}[1+2M^2\delta\int^0_{-\delta}e^{-\beta r}\alpha(dr)]\mathbb{E}[\int^T_{-\delta}\big(|\hat{Y}(s)|^2+|\hat{Z}(s)|^2\big)e^{\beta s}ds]\\
=&\ K(C,M,\delta,\alpha,\beta)\mathbb{E}[\int^T_{-\delta}\big(|\hat{Y}(s)|^2+|\hat{Z}(s)|^2\big)e^{\beta s}ds].
\end{aligned}\end{equation*}Note that\begin{equation*}
\begin{aligned}
&\mathbb{E}\int^T_0|\hat{Y}_{\delta}(s)|^2e^{\beta s}ds\\
=&\ \mathbb{E}\int^T_0|\int^0_{-\delta}\phi(s,s+r)(Y(s+r)-Y'(s+r))\alpha(dr)|^2e^{\beta s}ds\\
\leq&\ M^2\delta\mathbb{E}\int^T_0\int^0_{-\delta}|Y(s+r)-Y'(s+r)|^2\alpha(dr)e^{\beta s}ds\\
=&\ M^2\delta\mathbb{E}\int^0_{-\delta}e^{-\beta r}\int^T_0|Y(s+r)-Y'(s+r)|^2e^{\beta (s+r)}ds\alpha(dr)\\
=&\ M^2\delta\mathbb{E}\int^0_{-\delta}e^{-\beta r}\int^{T+r}_r|Y(u)-Y'(u)|^2e^{\beta u}du\alpha(dr)\\
\leq &\ M^2\delta\mathbb{E}\int^0_{-\delta}e^{-\beta r}\alpha(dr)\int^{T}_{-\delta}|\hat{Y}(s)|^2e^{\beta s}ds.\\
\end{aligned}
\end{equation*}If we choose $\beta=\frac{1}{\delta}$, then$$K(C,M,\delta,\alpha,\beta)=6C^2\delta[1+2M^2\delta e\alpha([-\delta,0])].$$Therefore, if $\delta$ is sufficiently small satisfying $K(C,M,\delta,\alpha,\beta)<1$, then $h$ is a contraction mapping under the norm $\|\cdot\|_{\beta}$. Our proof is completed.
\end{proof}Now, let us introduce the following advanced SDE as following:\begin{equation}\label{MBD4}
\left\{\begin{aligned}
dx(t)=&\ b\left(t, x(t), \int^{t+\delta}_{t}\phi(t,s)x(s)\alpha(ds)\right)dt+\sigma\left(t, x(t), \int^{t+\delta}_{t}\phi(t,s)x(s)\alpha(ds)\right)dW(t),\,\,t\in[0,T],\\
x(0)=&\ x_0,\\
x(t)=&\ \lambda(t), \,t\in(T,T+\delta].
\end{aligned}\right.\end{equation}It is notable that there exist some results to discuss the time-advanced ordinary differential equations (AODEs) (e.g., refer \cite{alw}, \cite{hwg}, \cite{km}, \cite{or}, \cite{po}, \cite{y}, etc.) which have been applied in various areas including traveling waves in physics, cell-growth in population dynamics, capital market in economics, life-cycle models, electronics, etc. However, to our best knowledge, the stochastic differential equations of advanced type (ASDE) has never been discussed before. Nevertheless, these stochastic advanced equations should also have considerable real meanings besides the control study only (as implied by the broad-range application of AODES, their deterministic counterpart). Keep this in mind, we will discuss these meanings in future study. Now we aim to study the $\mathcal{F}_t$-adapted solution $x(\cdot)\in L^2_{\mathbb{F}}(0,T+\delta;\mathbb{R}^n)$ of the ASDE (\ref{MBD4}). Suppose that for all $t \in [0,T],$ $b:\Omega\times\mathbb{R}^n\times L^2(\Omega,\mathcal{F}_r,\mathbb{P};\mathbb{R}^n)\rightarrow L^2(\Omega,\mathcal{F}_t,\mathbb{P};\mathbb{R}^n)$, $\sigma:\Omega\times\mathbb{R}^n\times L^2(\Omega,\mathcal{F}_r,\mathbb{P};\mathbb{R}^n)\rightarrow L^2(\Omega,\mathcal{F}_t,\mathbb{P};\mathbb{R}^{n\times d})$, where $r\in[t,T+\delta]$. We also assume that $b$ and $\sigma$ satisfies the following conditions:

\textbf{(H2.4)} There exists a constant $C>0$, such that for all $t\in[0,T],$ $x,x'\in\mathbb{R}^n$, $\zeta(\cdot),\zeta'(\cdot)\in L^2_{\mathbb{F}}
(t,T+\delta;\mathbb{R}^n)$, $r\in[t,T+\delta],$ we have
\begin{equation*}
\begin{aligned}
&|b(t,x,\zeta(r))-b(t,x',\zeta'(r))|+|\sigma(t,x,\zeta(r))-\sigma(t,x',\zeta'(r))|\\
\leq & \ C(|x-x'|+\mathbb{E}^{\mathcal{F}_t}[|\zeta(r)-\zeta'(r)|]).
\end{aligned}
\end{equation*}

\textbf{(H2.5)} $$\sup_{0\leq t\leq T}\big(|b(t,0,0)+\sigma(t,0,0)|\big)<+\infty.$$
Under these conditions, $b(t,\cdot,\cdot)$ and $\sigma(t,\cdot,\cdot)$ are $\mathcal{F}_t$-measurable and this ensures the solution of the advanced SDE will be $\mathcal{F}_t$-adapted. We have the following result to the ASDE (\ref{MBD4}).\begin{thm}\label{Thm2}
Assume $b$ and $\sigma$ satisfy (H2.4) and (H2.5), $\mathbb{E}|x_0|^2<+\infty$, $\mathbb{E}\sup_{T\leq t\leq T+\delta}|\lambda(t)|^2<+\infty,$ and the time delay $\delta$ is sufficiently small, then the ASDE (\ref{MBD4}) admits a unique $\mathcal{F}_t$-adapted solution.
\end{thm}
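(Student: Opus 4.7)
The plan is to mirror the contraction-mapping argument of Theorem \ref{Thm1}, with the roles of past and future interchanged. First I would work in the Banach space $\mathcal{H} := L^2_{\mathbb{F}}(0, T+\delta; \mathbb{R}^n)$ equipped with the equivalent weighted norm $\|X\|_\beta := (\mathbb{E}\int_0^{T+\delta} |X(s)|^2 e^{-\beta s}\, ds)^{1/2}$, where $\beta > 0$ is to be chosen at the end. Since the terminal segment $X(t) = \lambda(t)$ on $(T, T+\delta]$ cuts out a closed affine subset of $\mathcal{H}$, any contraction on this subset still produces a unique fixed point.

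Next I would define a Picard map $\Gamma : \mathcal{H} \to \mathcal{H}$ as follows. For $X \in \mathcal{H}$ agreeing with $\lambda$ on $(T,T+\delta]$, set $X_{\mathrm{adv}}(t) := \int_t^{t+\delta} \phi(t,s) X(s)\, \alpha(ds)$ and let $\Gamma(X)$ be the strong solution on $[0,T]$ of the classical forward SDE $dx(t) = b(t,x(t),X_{\mathrm{adv}}(t))\,dt + \sigma(t,x(t),X_{\mathrm{adv}}(t))\,dW(t)$ with $x(0)=x_0$, extended by $\lambda(t)$ on $(T,T+\delta]$. The structural assumption in (H2.4)--(H2.5) is what legitimises this step: even though $X_{\mathrm{adv}}(t)$ carries $\mathcal{F}_{t+\delta}$-information, $b(t,\cdot,\cdot)$ and $\sigma(t,\cdot,\cdot)$ are required to yield $\mathcal{F}_t$-measurable outputs, so the driver of the auxiliary SDE is $\mathbb{F}$-adapted and the usual Lipschitz SDE theory applies.

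The heart of the proof is an It\^o estimate on the difference. For $X, X' \in \mathcal{H}$ write $\hat X = X - X'$, $\hat x = \Gamma(X) - \Gamma(X')$, so $\hat x(0)=0$ and $\hat x \equiv 0$ on $(T,T+\delta]$. Applying It\^o's formula to $e^{-\beta t}|\hat x(t)|^2$ on $[0,T]$, taking expectation, and using Young's inequality together with (H2.4) gives
\begin{equation*}
\beta\, \mathbb{E}\!\int_0^T e^{-\beta t} |\hat x(t)|^2 dt \leq C_1\, \mathbb{E}\!\int_0^T e^{-\beta t} |\hat x(t)|^2 dt + C_2\, \mathbb{E}\!\int_0^T e^{-\beta t} |\hat X_{\mathrm{adv}}(t)|^2 dt,
\end{equation*}
after removing the conditional expectations $(\mathbb{E}^{\mathcal{F}_t}|\cdot|)^2$ from (H2.4) via Jensen's inequality and the tower property. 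Absorbing the first right-hand term by picking $\beta$ large, then estimating the advanced integral by a Cauchy--Schwarz/Fubini manipulation analogous to that in Theorem \ref{Thm1} (change of variable $s = t+r$), yields $\|\hat x\|_\beta^2 \leq K(C,M,\delta,\alpha,\beta) \|\hat X\|_\beta^2$ with $K \to 0$ as $\delta \to 0$; the choice $\beta = 1/\delta$ should suffice, in parallel with the backward case. Therefore $\Gamma$ is a strict contraction for sufficiently small $\delta$, and the Banach fixed-point theorem delivers the unique $\mathcal{F}_t$-adapted solution in $L^2_{\mathbb{F}}(0,T+\delta;\mathbb{R}^n)$.

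The step I expect to be the main obstacle is the rigorous handling of the conditional expectation hidden in (H2.4). Because $\hat X_{\mathrm{adv}}(t)$ is $\mathcal{F}_{t+\delta}$-measurable, the Lipschitz bound only controls $\mathbb{E}^{\mathcal{F}_t}|\hat X_{\mathrm{adv}}(t)|$, and one must carefully combine Jensen with the tower property \emph{before} running the Cauchy--Schwarz/change-of-variable argument, so that no residual conditional expectations remain inside the time integral. Verifying that this collapse is compatible with the Young-type splitting of $2\hat x(t)\cdot\hat b(t)$ is the one step that does not translate verbatim from the backward proof of Theorem \ref{Thm1}; everything else amounts to rewriting that argument forward in time with the weight $e^{-\beta t}$ replacing $e^{\beta t}$.
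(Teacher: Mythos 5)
Your proposal is correct and follows essentially the same route as the paper: a contraction argument in $L^2_{\mathbb{F}}(0,T+\delta;\mathbb{R}^n)$ under the weighted norm with weight $e^{-\beta t}$, an It\^o estimate on $e^{-\beta t}|\hat x(t)|^2$, removal of the conditional expectation in (H2.4) via Jensen and the tower property, the change-of-variable/Fubini bound $\mathbb{E}\int_0^T e^{-\beta t}|\hat X_{\mathrm{adv}}(t)|^2dt\leq M^2\delta\int_0^\delta e^{\beta s}\alpha(ds)\,\|\hat X\|_\beta^2$, and the choice $\beta=1/\delta$. The only (harmless) deviation is that the paper freezes \emph{both} arguments of $b$ and $\sigma$ at the input $X$, so each Picard step is a direct stochastic integral rather than the solution of an auxiliary classical SDE as in your map $\Gamma$.
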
\begin{proof}Similar to Theorem \ref{Thm1}, let us define the following norm in Banach space $L^2_{\mathbb{F}}(0,T+\delta;\mathbb{R}^n)$ which is more convenient for us to construct a contraction mapping:
$$\|\nu(\cdot)\|_{\beta}=\big(\mathbb{E}[\int^{T+\delta}_0|\nu(s)|^2e^{-\beta s}ds]\big)^{\frac{1}{2}}.$$
For simplicity, we denote $\int^{t+\delta}_{t}\phi(t,s)x(s)\alpha(ds)$ by $x_{\delta^+}(t)$, and set\begin{equation*}
\left\{\begin{aligned}
x(t)=&\ x_0+\int^t_0b(s,X(s),X_{\delta^+}(s))ds+\int^t_0\sigma(s,X(s),X_{\delta^+}(s))dW(s),\,t\in[0,T],\\
x(t)=&\ \lambda(t),\,t\in(T,T+\delta].
\end{aligned}
\right.
\end{equation*}Then we can define a mapping $I:L^2_{\mathbb{F}}(0,T+\delta;\mathbb{R}^n)\rightarrow L^2_{\mathbb{F}}(0,T+\delta;\mathbb{R}^n)$ such that $I[X(\cdot)]=x(\cdot)$. For arbitrary $X(\cdot),X'(\cdot)\in L^2_{\mathbb{F}}(0,T+\delta;\mathbb{R}^n)$, we introduce the following notations:$$\begin{aligned}
I[X(\cdot)]=x(\cdot)&,\,\,\,I[X'(\cdot)]=x'(\cdot),\\
\hat{X}(\cdot)=X(\cdot)-X'(\cdot)&,\,\,\,\hat{x}(\cdot)=x(\cdot)-x'(\cdot).
\end{aligned}$$Consequently, $\hat{x}(\cdot)$ satisfies\begin{equation*}
\left\{\begin{aligned}
\hat{x}(t)=&\ \int^t_0[b(s,X(s),X_{\delta^+}(s))-b(s,X'(s),X'_{\delta^+}(s))]ds\\
&\ +\int^t_0[\sigma(s,X(s),X_{\delta^+}(s))-\sigma(s,X'(s),X'_{\delta^+}(s))]dW(s),\,\,t\in[0,T],\\
\hat{x}(0)=&\ 0,\\
\hat{x}(t)=&\ 0,\,\,t\in(T,T+\delta].
\end{aligned}\right.\end{equation*}Applying It\^{o}'s formula to $e^{-\beta t}|\hat{x}(t)|^2$ on $[0,T]$, we get\begin{equation*}\begin{aligned}
&\ \mathbb{E}[e^{-\beta T}|\hat{x}(T)|^2]+\beta\mathbb{E}[\int^T_0 e^{-\beta t}|\hat{x}(t)|^2dt]\\
=&\ \mathbb{E}[\int^T_0(2e^{-\beta t}\langle\hat{b}(t),\hat{x}(t)\rangle+e^{-\beta t}\langle\hat{\sigma}(t),\hat{\sigma}(t)\rangle)dt],
\end{aligned}
\end{equation*}with$$\begin{aligned}
\hat{b}(t)=&\ b(t,X(t),X_{\delta^+}(t))-b(t,X'(t),X'_{\delta^+}(t)),\\
\hat{\sigma}(t)=&\ \sigma(t,X(t),X_{\delta^+}(t))-\sigma(t,X'(t),X'_{\delta^+}(t)).
\end{aligned}$$Since $b, \sigma$ satisfy (H2.4), we have\begin{equation*}
\begin{aligned}
&\ \beta\mathbb{E}[\int^T_0e^{-\beta t}|\hat{x}(t)|^2dt]\\
\leq &\ \mathbb{E}[\int^T_0e^{-\beta t}|\hat{x}(t)|^2dt]+\mathbb{E}[\int^T_0e^{-\beta t}|\hat{b}(t)|^2dt]+\mathbb{E}[\int^T_0e^{-\beta t}|\hat{\sigma}(t)|^2dt],\\
\leq &\ \mathbb{E}[\int^T_0e^{-\beta t}|\hat{x}(t)|^2dt]+2C^2\mathbb{E}\Big[\int^T_0e^{-\beta t}\big(|\hat{X}(t)|+\mathbb{E}^{\mathcal{F}_t}[|\hat{X}_{\delta^+}(t)|]\big)^2dt\Big].
\end{aligned}
\end{equation*}Moreover, it follows that\begin{equation*}
\begin{aligned}
&\ (\beta-1)\mathbb{E}[\int^T_0e^{-\beta t}|\hat{x}(t)|^2dt]\\
\leq &\ 4C^2\mathbb{E}[\int^T_0e^{-\beta t}|\hat{X}(t)|^2dt]+4C^2\mathbb{E}[\int^T_0e^{-\beta t}|\hat{X}_{\delta^+}(t)|^2dt]\\
\leq &\ 4C^2[1+M^2\delta\int^{\delta}_0e^{\beta s}\alpha(ds)]\mathbb{E}[\int^{T+\delta}_0e^{-\beta t}|\hat{X}(t)|^2dt],
\end{aligned}
\end{equation*}due to the fact$$\begin{aligned}
&\ \mathbb{E}[\int^T_0e^{-\beta t}|\hat{X}_{\delta^+}(t)|^2dt]\\
=&\ \mathbb{E}[\int^{T}_{0}e^{-\beta t}|\int^{t+\delta}_t\phi(t,s)\hat{X}(s)\alpha(ds)|^2dt]\\
\leq&\ M^2\delta\mathbb{E}[\int^{T}_0e^{-\beta t}\int^{\delta}_0|\hat{X}(s)|^2\alpha(ds)dt]\\
=&\ M^2\delta\mathbb{E}[\int^{\delta}_0e^{\beta s}\int^{T}_0e^{-\beta(t+s)}|\hat{X}(s)|^2dt\alpha(ds)]\\
\leq&\ M^2\delta\int^{\delta}_0e^{\beta s}\alpha(ds)\mathbb{E}[\int^{T+\delta}_0e^{-\beta t}|\hat{X}(t)|^2dt].
\end{aligned}$$Set$$K'(C,M,\delta,\alpha,\beta)=\frac{4C^2[1+M^2\delta\int^{\delta}_0e^{\beta s}\alpha(ds)]}{\beta-1}.$$If we choose $\beta=\frac{1}{\delta}$, then for sufficiently small $\delta$, we have $K'(C,M,\delta,\alpha,\beta)\leq
 \frac{4C^2\delta[1+M^2\delta e\alpha([0,\delta])]}{1-\delta}<1$. It follows the mapping $I$ is contraction, hence the result.\end{proof}

\section{Optimal control problem for backward stochastic system with delay}

In this section we study a kind of stochastic recursive delayed control problems as follows:\begin{equation}\label{MBD6}
\left\{\begin{aligned}
-dy(t)=&\ f\left(t,y(t),\int^t_{t-\delta}\phi(t,s)y(s)\alpha(ds),z(t),\int^t_{t-\delta}\phi(t,s)z(s)\alpha(ds), v(t),\int^t_{t-\delta}\phi(t,s)v(s)\alpha(ds)\right)dt\\&\ -z(t)dW(t),\,\quad t\in[0,T],\\
y(T)=&\ \xi,\\
y(t)=&\ \varphi(t),\,z(t)=\psi(t),\,\quad t\in[-\delta,0).\end{aligned}\right.
\end{equation}Here $f:\Omega\times[0,T]\times\mathbb{R}^n\times\mathbb{R}^n\times\mathbb{R}^{n\times d}\times\mathbb{R}^{n\times d}\times\mathbb{R}^k\times\mathbb{R}^k\longrightarrow
\mathbb{R}^n$ is given measurable function, $\xi\in L^2(\Omega,\mathcal{F}_T,\mathbb{P};\mathbb{R}^n)$, $\varphi(\cdot)$ is deterministic function. $v(\cdot)$ is the control process with initial path $\eta$. The stochastic recursive control problems is to find the optimal control to achieve a pre-given goal $\xi$ at the terminal time $T$, and also maximize some given cost functional. Let $U$ be a nonempty convex subset. We denote $\mathcal{U}$ the set of all admissible control processes $v(\cdot)$ of the form$$v(t)=\left\{\begin{aligned}
\eta(t)&,\,\,\,t\in[-\delta,0),\\
v(t)\in & L^2_{\mathbb{F}}(0,T;\mathbb{R}^k),\,v(t)\in U,\,a.s.,\,t\in[0,T].
\end{aligned}\right.$$The objective is to maximize the following functional over $\mathcal{U}$:
$$\begin{aligned}
J(v(\cdot)) =&\ \mathbb{E}[\int_0^{T}l \left(t, y(t), \int_{t-\delta}^{t}\phi(t,s)y(s)\alpha(ds), z(t), \int_{t-\delta}^{t}\phi(t,s)z(s)\alpha(ds),  v(t),\int_{t-\delta}^{t}\phi(t,s)v(s)\alpha(ds)\right)dt\\
&\ +\gamma(y(0))].\\
\end{aligned}$$For simplicity, denote $(\int^t_{t-\delta}\phi(t,s)y(s)\alpha(ds), \int^t_{t-\delta}\phi(t,s)z(s)\alpha(ds), \int^t_{t-\delta}\phi(t,s)v(s)\alpha(ds))$ by $(y_{\delta}(t),$ $z_{\delta}(t), v_{\delta}(t))$ if no confusion occurs.

\textbf{(H3.1)} $f$ is continuously differentiable in $(y, y_{\delta}, z, z_{\delta}, v, v_{\delta})$. Moreover, the partial derivatives $f_y, f_{y_{\delta}}, f_z, f_{z_{\delta}}, f_v$ and $f_{v_{\delta}}$ of $f$ with respect to $(y,y_{\delta}, z, z_{\delta}, v, v_{\delta})$ are uniformly bounded.

Then if $v(\cdot)$ is admissible control and assumption (H3.1) holds, then the delayed BSDE (\ref{MBD6}) has a unique solution $(y^v(\cdot),z^v(\cdot))\in L^2_{\mathbb{F}}(0,T+\delta;\mathbb{R}^n)\times L^2_{\mathbb{F}}(0,T+\delta;\mathbb{R}^{n\times d})$ on $[0,T+\delta]$ for sufficiently small $0\leq\delta\leq T$.

\textbf{(H3.2)} For each $v(\cdot)\in\mathcal{U}$, $l(\cdot,y^v(\cdot),y^v_{\delta}(\cdot),z^v(\cdot),z^v_{\delta}(\cdot),v(\cdot),v_{\delta}(\cdot))\in L^1_{\mathbb{F}}(0,T;\mathbb{R})$, $l$ is differentiable to $(y,y_{\delta},z,z_{\delta},v,v_{\delta}),$ $\gamma$ is differentiable with respect to $y$, and all the derivatives are bounded.

Define the Hamiltonian function $H:[0,T]\times\mathbb{R}^n\times\mathbb{R}^n\times\mathbb{R}^{n\times d}\times\mathbb{R}^k\times\mathbb{R}^k\times\mathbb{R}^n\rightarrow\mathbb{R}$ by\begin{equation*}
\begin{aligned}
H(t,y,y_{\delta},z,z_{\delta},v,v_{\delta})
=\ l(t,y,y_{\delta},z,z_{\delta},v,v_{\delta})-\langle f(t,y,y_{\delta},z,z_{\delta},v,v_{\delta}),p \rangle.
\end{aligned}
\end{equation*}For each $v(\cdot)\in\mathcal{U},$ the associated adjoint equation satisfies the following ASDE:
\begin{equation}\label{MBD7}
\left\{\begin{aligned}
dp^v(t)=&\ \big\{ -H_y(t,\Theta^v(t),v(t),v_{\delta}(t),p^v(t))\\
&\ -\mathbb{E}^{\mathcal{F}_t}[\int^{t+\delta}_tH_{y_{\delta}}(s,\Theta^v(s),v(s),v_{\delta}(s),p^v(s))\phi(s,t)
\chi_{[0,T]}(s)ds]\frac{\alpha(dt)}{dt}\big\}dt\\
&+\big\{-H_z(t,\Theta^v(t),v(t),v_{\delta}(t),p^v(t))\\
&\ -\mathbb{E}^{\mathcal{F}_t}[\int^{t+\delta}_tH_{z_{\delta}}(s,\Theta^v(s),v(s),v_{\delta}(s),p^v(s))\phi(s,t)
\chi_{[0,T]}(s)ds]\frac{\alpha(dt)}{dt}\big\}dW(t),\,\,t\in[0,T],\\
p^v(0)=&\ -\gamma_y(y(0)),\\
\end{aligned}\right.
\end{equation}with $\Theta^v(t)=(y^v(t),y^v_{\delta}(t),z^v(t),z^v_{\delta}(t))$ and $\frac{\alpha(dt)}{dt}$ is the Radon-Nikodym derivative.\begin{rmk}For a given admissible control $v(\cdot)$, Eq.(\ref{MBD7}) is an ASDE. By the virtue of the indicative function $\chi_{[0,T]}(s)$, it is not necessary to give the value of $p^v(t)$ on $(T,T+\delta]$. Moreover, the ASDE (\ref{MBD7}) admits a unique solution under condition (H3.1) and (H3.2) due to Theorem \ref{Thm2}.\end{rmk}Now we can give the first main result of this paper in the following:\begin{thm}\label{Thm3} (Sufficient condition of optimality) Let (H3.1) and (H3.2) hold. Suppose for $u(\cdot)\in\mathcal{U}$, $(y(\cdot), z(\cdot))$ is the corresponding trajectory and $p(\cdot)$ the corresponding solution of adjoint equation (\ref{MBD7}). If the following condition holds true: \begin{equation}\label{MBD10}
 \begin{aligned}
 &\langle H_v(t,\Theta(t),u(t),u_{\delta}(t),p(t))\\
 &\ +\mathbb{E}^{\mathcal{F}_t}
 [\int^{t+\delta}_tH_{v_{\delta}}(s,\Theta(s),u(t),u_{\delta}(s),p(s))\phi(s,t)\chi_{[0,T]}(s)ds]
 \frac{\alpha(dt)}{dt},u(t)\rangle\\
 =&\ \max_{v\in U}\langle H_v(t,\Theta(t),u(t),u_{\delta}(t),p(t))\\
 &\ +\mathbb{E}^{\mathcal{F}_t}
 [\int^{t+\delta}_tH_{v_{\delta}}(s,\Theta(s),u(t),u_{\delta}(s),p(s))\phi(s,t)\chi_{[0,T]}(s)ds]
 \frac{\alpha(dt)}{dt},v\rangle,
 \end{aligned}
\end{equation}moreover, if $H(t,y,y_{\delta},z,z_{\delta},v,v_{\delta},p(t))$ is a concave function of $(y, y_{\delta}, z, z_{\delta}, v, v_{\delta})$, and $\gamma$ is concave in $y$, then $u(\cdot)$ is an optimal control for our problem.\end{thm}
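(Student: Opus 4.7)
The plan is to prove $J(u(\cdot)) \geq J(v(\cdot))$ for every admissible $v(\cdot) \in \mathcal{U}$ via the standard duality-plus-concavity argument adapted to the BSDE/ASDE pair at hand. Let $(y^v,z^v)$ denote the trajectory associated with $v(\cdot)$, write $\hat y = y^v - y^u$, $\hat z = z^v - z^u$, $\hat v = v-u$, and define $\hat y_\delta, \hat z_\delta, \hat v_\delta$ analogously. The crucial structural observation is that $\hat y(T)=0$, since both controlled trajectories satisfy the same terminal condition $y(T)=\xi$. The cost difference splits as
\begin{equation*}
J(v) - J(u) \;=\; \mathbb{E}\!\int_0^T \!\bigl[\,l^v(t) - l^u(t)\bigr]dt \,+\, \mathbb{E}\bigl[\gamma(y^v(0)) - \gamma(y^u(0))\bigr],
\end{equation*}
and the goal is to bound this quantity above by zero.

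The first step is to apply It\^o's formula to $\langle p(t),\hat y(t)\rangle$ on $[0,T]$, using the ASDE dynamics of the adjoint $p$ from (\ref{MBD7}) together with the BSDE dynamics of $\hat y$. Because $\hat y(T)=0$ and $p(0) = -\gamma_y(y^u(0))$, the boundary contributions produce exactly $\mathbb{E}\langle\gamma_y(y^u(0)),\hat y(0)\rangle$, which will later pair with the concavity-of-$\gamma$ inequality. The drift and diffusion contributions produce terms of the form $\langle H_y,\hat y\rangle$, $\langle H_z,\hat z\rangle$, $\langle p, f^v-f^u\rangle$, together with two delay-advance conditional-expectation terms coming from $H_{y_\delta}$ and $H_{z_\delta}$. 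The key Fubini-type identity that is the technical heart of the proof is
\begin{equation*}
\mathbb{E}\!\int_0^T \!\mathbb{E}^{\mathcal F_t}\!\!\left[\int_t^{t+\delta}\!\! H_{y_\delta}(s)\phi(s,t)\chi_{[0,T]}(s)\,ds\right]\hat y(t)\,\alpha(dt) \;=\; \mathbb{E}\!\int_0^T \!\langle H_{y_\delta}(s),\hat y_\delta(s)\rangle\,ds,
\end{equation*}
and analogously for the $z_\delta$ term. This is precisely where the time-advancement of the adjoint compensates the time-delay of the state, and it relies on swapping the order of integration, using the cutoff $\chi_{[0,T]}$, and exploiting that $\mathbb{E}^{\mathcal F_t}$ may be removed after pairing with an $\mathcal F_t$-adapted integrand.

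The second step uses concavity. By the definition $H = l - \langle f,p\rangle$, we rewrite $l^v - l^u = (H^v - H^u) + \langle f^v - f^u, p\rangle$. Concavity of $H$ in $(y,y_\delta,z,z_\delta,v,v_\delta)$ gives the sub-gradient inequality
\begin{equation*}
H^v - H^u \leq \langle H_y,\hat y\rangle + \langle H_{y_\delta},\hat y_\delta\rangle + \langle H_z,\hat z\rangle + \langle H_{z_\delta},\hat z_\delta\rangle + \langle H_v,\hat v\rangle + \langle H_{v_\delta},\hat v_\delta\rangle,
\end{equation*}
and concavity of $\gamma$ yields $\gamma(y^v(0)) - \gamma(y^u(0)) \leq \langle\gamma_y(y^u(0)),\hat y(0)\rangle$. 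Substituting these into $J(v)-J(u)$ and plugging in the It\^o identity, the $\langle p, f^v-f^u\rangle$ terms cancel, the $\hat y(0)$ contributions cancel against the $\gamma$ term, and the state-dependent pairings $\langle H_y,\hat y\rangle$, $\langle H_{y_\delta},\hat y_\delta\rangle$, $\langle H_z,\hat z\rangle$, $\langle H_{z_\delta},\hat z_\delta\rangle$ telescope away. A second Fubini interchange collects the remaining $H_v$ and $H_{v_\delta}$ contributions into the single pairing $\langle H_v + \mathbb{E}^{\mathcal F_t}[\int_t^{t+\delta} H_{v_\delta}\phi\chi_{[0,T]}ds]\,\alpha(dt)/dt,\; v(t)-u(t)\rangle$, which by the maximum condition (\ref{MBD10}) is non-positive $dt\otimes d\mathbb{P}$-a.e.

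The main obstacle will not be any single estimate but the careful book-keeping of the two Fubini exchanges between the forward conditional-expectation/advance integrals of the adjoint ASDE and the backward $\alpha$-integrals of the state equation. In particular the Radon--Nikodym factor $\alpha(dt)/dt$, the cutoff $\chi_{[0,T]}$ that allows the ASDE to avoid specifying $p$ past $T$, and the removal of the nested $\mathbb{E}^{\mathcal F_t}$ upon pairing with adapted integrands must all align so that the final residual reduces exactly to the quantity controlled by (\ref{MBD10}). Once this interchange is verified, concavity finishes the argument immediately.
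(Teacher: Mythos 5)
Your proposal follows essentially the same route as the paper's own proof: the duality computation via It\^o's formula applied to $\langle p(\cdot),y(\cdot)-y^v(\cdot)\rangle$, the Fubini/conditional-expectation interchange that converts the $H_{y_\delta},H_{z_\delta},H_{v_\delta}$ delay pairings into the advanced adjoint terms (the paper's identity (\ref{MBD14})), concavity of $H$ and $\gamma$, and the maximum condition to dispose of the control terms. The only cosmetic difference is that you correctly note the residual control pairing is merely non-positive (the paper asserts it equals zero, but $\leq 0$ is all that is needed and is what (\ref{MBD10}) actually yields), so the argument is sound and matches the paper.
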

\begin{proof}Choose a $v(\cdot)\in\mathcal{U}$ and let $(y^v(\cdot),z^v(\cdot))$ be the corresponding solution of (\ref{MBD6}). To simplify the notation, we also use $\Theta^v(t)=(y^v(t),y^v_{\delta}(t),z^v(t),z^v_{\delta}(t))$ and $\Theta(t)=(y(t),y_{\delta}(t),z(t),z_{\delta}(t))$. Let\begin{equation*}
\begin{aligned}
I=&\ \mathbb{E}\big[\int^T_0\left\{l(t,y(t),y_{\delta}(t),z(t),z_{\delta}(t),u(t),u_{\delta}(t))
-l(t,y^v(t),y^v_{\delta}(t),z^v(t),z^v_{\delta}(t),v(t),v_{\delta}(t))\right\}dt\big],\\
II=&\ \big[\gamma(y(0))-\gamma(y^v(0))\big].
\end{aligned}\end{equation*}We want to prove that\begin{equation}\label{MBD9}
J(u(\cdot))-J(v(\cdot))=I+II\geq0.
\end{equation}Since $\gamma$ is concave on $y$,
$$II\geq\gamma_y(y(0))^{\top}(y(0)-y^v(0))=-p(0)^{\top}(y(0)-y^v(0)).$$Applying It\^{o}'s formula to $\langle p(\cdot),y(\cdot)-y^v(\cdot)\rangle$, we have\begin{equation}\label{MBD11}
\begin{aligned}
&\ p(0)^{\top}(y(0)-y^v(0))\\
=&\mathbb{E}\int^T_0 \langle p(t),f(t,\Theta(t),u(t),u_{\delta}(t))-f(t,\Theta^v(t),v(t),v_{\delta}(t))\rangle dt\\
&+\ \mathbb{E}\int^T_0 \langle H_y(t,\Theta(t),u(t),u_{\delta}(t),p(t))\\
&\ +\mathbb{E}^{\mathcal{F}_t}[\int^{t+\delta}_tH_{y_{\delta}}(s,\Theta(s),u(s),u_{\delta}(s),p(s))\phi(s,t)
\chi_{[0,T]}(s)ds]
\frac{\alpha(dt)}{dt},y(t)-y^v(t) \rangle dt\\
&+ \mathbb{E}\int^T_0 \langle H_z(t,\Theta(t),u(t),u_{\delta}(t),p(t))\\
&\ +\mathbb{E}^{\mathcal{F}_t}[\int^{t+\delta}_tH_{z_{\delta}}(s,\Theta(s),u(s),u_{\delta}(s),p(s))\phi(s,t)
\chi_{[0,T]}(s)ds]
\frac{\alpha(dt)}{dt},z(t)-z^v(t)\rangle dt.\\
\end{aligned}
\end{equation}On the other hand,\begin{equation}\label{MBD12}
\begin{aligned}
I=&\ \mathbb{E}\int^T_0[H(t,\Theta(t),u(t),u_{\delta}(t),p(t))-H(t,\Theta^v(t),v(t),v_{\delta}(t),p(t))]dt\\
&\ +\mathbb{E}\int^T_0\langle p(t),f(t,\Theta(t),u(t),u_{\delta}(t))-f(t,\Theta^v(t),v(t),v_{\delta}(t))\rangle dt.
\end{aligned}\end{equation}Since $(\Theta,v,v_{\delta})\rightarrow H(t,\Theta,v,v_{\delta},p(t))$ is concave, we have
\begin{equation}\label{MBD13}
\begin{aligned}
I\geq&\ -\mathbb{E}\int^T_0\langle H_y(t,\Theta(t),u(t),u_{\delta}(t),p(t)),y^v(t)-y(t)\rangle dt\\
&\ -\mathbb{E}\int^T_0\langle H_{y_{\delta}}(t,\Theta(t),u(t),u_{\delta}(t),p(t)),y^v_{\delta}(t)-y_{\delta}(t)\rangle dt\\&\ -\mathbb{E}\int^T_0\langle H_z(t,\Theta(t),u(t),u_{\delta}(t),p(t)),z^v(t)-z(t)\rangle dt\\
&\ -\mathbb{E}\int^T_0\langle H_{z_{\delta}}(t,\Theta(t),u(t),u_{\delta}(t),p(t)),z^v_{\delta}(t)-z_{\delta}(t)\rangle dt\\
&\ -\mathbb{E}\int^T_0\langle H_v(t,\Theta(t),u(t),u_{\delta}(t),p(t)),v(t)-u(t)\rangle dt\\
&\ -\mathbb{E}\int^T_0\langle H_{v_{\delta}}(t,\Theta(t),u(t),u_{\delta}(t),p(t)),v_{\delta}(t)-u_{\delta}(t)\rangle dt\\
&\ +\mathbb{E}\int^T_0\langle p(t),f(t,\Theta(t),u(t),u_{\delta}(t))-f(t,\Theta^v(t),v(t),v_{\delta}(t))\rangle dt.
\end{aligned}
\end{equation}Moreover, we have\begin{equation}\label{MBD14}\begin{aligned}
&\ \mathbb{E}\int^T_0\langle H_{v_{\delta}}(t,\Theta(t),u(t),u_{\delta}(t),p(t)),v_{\delta}(t)-u_{\delta}(t)\rangle dt\\
=&\ \mathbb{E}\int^T_0\langle H_{v_{\delta}}(s,\Theta(s),u(s),u_{\delta}(s),p(s)),\int^s_{s-\delta}\phi(s,r)(v(r)-u(r))
\alpha(dr)\rangle ds\\
=&\ \mathbb{E}\int^T_0\langle \mathbb{E}^{\mathcal{F}_r}\int^{r+\delta}_rH_{v_{\delta}}(s,\Theta(s),u(s),
u_{\delta}(s),p(s))\phi(s,r)\chi_{[0,T]}(s)ds,
v(r)-u(r)\rangle \alpha(dr)\\
=&\ \mathbb{E}\int^T_0\langle \mathbb{E}^{\mathcal{F}_t}[\int^{t+\delta}_tH_{v_{\delta}}(s,\Theta(s),u(s),u_{\delta}(s),p(s))\phi(s,t)\chi_{[0,T]}(s)ds]\frac{\alpha(dt)}{dt},
v(t)-u(t)\rangle dt.\\
\end{aligned}\end{equation}By the maximum condition (\ref{MBD10}), we can obtain\begin{equation}\label{MBD014}
\begin{aligned}
&\ \mathbb{E}\int^T_0\langle H_v(t,\Theta(t),u(t),u_{\delta}(t),p(t)),v(t)-u(t)\rangle dt\\
&\ +\mathbb{E}\int^T_0\langle H_{v_{\delta}}(t,\Theta(t),u(t),u_{\delta}(t),p(t)),v_{\delta}(t)-u_{\delta}(t)\rangle dt\\
=&\ 0.
\end{aligned}\end{equation}From (\ref{MBD9})-(\ref{MBD014}), it is easy to get\begin{equation*}
\begin{aligned}
&\ J(u(\cdot)-J(v(\cdot))\\
\geq&\ -\mathbb{E}\int^T_0\langle H_y(t,\Theta(t),u(t),u_{\delta}(t),p(t)),y^v(t)-y(t)\rangle dt\\
&\ -\mathbb{E}\int^T_0\langle H_{y_{\delta}}(t,\Theta(t),u(t),u_{\delta}(t),p(t)),y^v_{\delta}(t)-y_{\delta}(t)\rangle dt\\
&\ -\mathbb{E}\int^T_0\langle H_z(t,\Theta(t),u(t),u_{\delta}(t),p(t)),z^v(t)-z(t)\rangle dt\\
&\ -\mathbb{E}\int^T_0\langle H_{z_{\delta}}(t,\Theta(t),u(t),u_{\delta}(t),p(t)),z^v_{\delta}(t)-z_{\delta}(t)\rangle dt\\
&+\ \mathbb{E}\int^T_0\langle H_y(t,\Theta(t),u(t),u_{\delta}(t),p(t))\\
&\ +\mathbb{E}^{\mathcal{F}_t}[\int^{t+\delta}_tH_{y_{\delta}}
(s,\Theta(s),u(s),u_{\delta}(s),p(s))\phi(s,t)\chi_{[0,T]}(s)ds]
\frac{\alpha(dt)}{dt},y^v(t)-y(t)\rangle dt\\
&+ \mathbb{E}\int^T_0\langle H_z(t,\Theta(t),u(t),u_{\delta}(t),p(t))\\
&\ +\mathbb{E}^{\mathcal{F}_t}[\int^{t+\delta}_{t}H_{z_{\delta}}
(s,\Theta(s),u(s),u_{\delta}(s),p(s))\phi(s,t)\chi_{[0,T]}(s)ds]
\frac{\alpha(dt)}{dt},z^v(t)-z(t)\rangle dt\\
=&\ 0.
\end{aligned}\end{equation*}So, we verify that $J(u(\cdot))-J(v(\cdot))\geq0$ for any $v(\cdot)\in\mathcal{U}$, and it follows that $u(\cdot)$ is the optimal control.\end{proof}\begin{cor}\label{Cor1}If the $\alpha(dt)$ is the Dirac measure at $-\delta$, then the system involves pointwise delay, i.e. $y_{\delta}(t)=y(t-\delta),z_{\delta}(t)=z(t-\delta),v_{\delta}(t)=v(t-\delta)$. In this case, the sufficient condition of optimality is$$H_v(t,\Theta(t),u(t),u(t-\delta),p(t))+\mathbb{E}^{\mathcal{F}_t}[H_{v_{\delta}}(t+\delta,\Theta(t+\delta),u(t),u(t+\delta),p(t+\delta))]=0,$$
with adjoint equation\begin{equation}\label{MBD07}
\left\{\begin{aligned}
dp^v(t)=&\ \big\{ -H_y(t,\Theta^v(t),v(t),v(t-\delta),p^v(t))\\
&\ -\mathbb{E}^{\mathcal{F}_t}[H_{y_{\delta}}(t+\delta,\Theta^v(t+\delta),v(t+\delta),v(t),p^v(t+\delta))]\big\}dt\\
&\big\{  -H_z(t,\Theta^v(t),v(t),v(t-\delta),p^v(t))\\
&\ -\mathbb{E}^{\mathcal{F}_t}[H_{z_{\delta}}(t+\delta,\Theta^v(t+\delta),v(t+\delta),v(t),p^v(t+\delta))]\big\}dW(t),\,\,t\in[0,T],\\
p^v(0)=&\ -\gamma_y(y(0)),\\
p^v(t)=&\ 0,\,\,t\in(T,T+\delta],
\end{aligned}\right.
\end{equation}where $\Theta^v(t)=(y^v(t),y^v(t-\delta),z^v(t),z^v(t-\delta))$.
\end{cor}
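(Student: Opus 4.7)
The plan is to view the corollary as a direct specialization of Theorem \ref{Thm3} obtained by taking the $\sigma$-finite measure $\alpha$ to be the Dirac mass at the single point $-\delta\in[-\delta,0]$. Under this choice (and with $\phi\equiv 1$ implicitly, or else absorbed into $f$), the delay convolutions collapse to pointwise evaluations, so that $y_\delta(t)=y(t-\delta)$, $z_\delta(t)=z(t-\delta)$, and $v_\delta(t)=v(t-\delta)$ as stated. What remains is to show that the advanced-time conditional-expectation coefficients appearing in (\ref{MBD7}) and (\ref{MBD10}) reduce, under this Dirac specialization, to the simple pointwise advanced terms written in the corollary, and that the boundary convention $p^v\equiv 0$ on $(T,T+\delta]$ correctly encodes the indicator $\chi_{[0,T]}(s)$ sitting inside those coefficients.

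The conceptual difficulty is that the Radon--Nikodym derivative $\frac{\alpha(dt)}{dt}$ is a purely formal symbol when $\alpha$ is singular with respect to Lebesgue measure, so I would avoid trying to give it a direct meaning and instead re-run the relevant algebraic identities of the proof of Theorem \ref{Thm3} with the Dirac measure substituted from the outset. Concretely, starting from (\ref{MBD14}), the inner integral becomes
\begin{equation*}
\int_{s-\delta}^{s}\phi(s,r)\bigl(v(r)-u(r)\bigr)\alpha(dr)=v(s-\delta)-u(s-\delta),
\end{equation*}
and after the change of variable $t=s-\delta$, using $v\equiv u\equiv\eta$ on $[-\delta,0)$ and the indicator $\chi_{[0,T]}(t+\delta)$ to cut off $t>T-\delta$, the pairing against $v(t)-u(t)$ conditioned on $\mathcal{F}_t$ produces exactly the advanced-time term $\mathbb{E}^{\mathcal{F}_t}[H_{v_\delta}(t+\delta,\Theta(t+\delta),u(t+\delta),u_\delta(t+\delta),p(t+\delta))]$ stated in the corollary (using $u_\delta(t+\delta)=u(t)$). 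Adding the unchanged $H_v$ pairing recovers the full pointwise maximum condition.

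The same Dirac-plus-time-shift substitution is applied to the drift and diffusion of the adjoint ASDE (\ref{MBD7}): the inner advanced integral weighted by $\alpha(dt)/dt$ reduces to the pointwise advanced quantity $\mathbb{E}^{\mathcal{F}_t}[H_{y_\delta}(t+\delta,\Theta^v(t+\delta),v(t+\delta),v(t),p^v(t+\delta))]$ on $[0,T-\delta]$ and vanishes on $(T-\delta,T]$ because of $\chi_{[0,T]}(t+\delta)$, and analogously for the $H_{z_\delta}$ term. The boundary prescription $p^v(t)\equiv 0$ for $t\in(T,T+\delta]$ encodes precisely this cutoff and at the same time provides the terminal data required for Theorem \ref{Thm2} to furnish a unique $\mathcal{F}_t$-adapted solution of the resulting pointwise-delay ASDE.

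The main obstacle, and really the only substantive step, is the careful bookkeeping of the formal $\alpha(dt)/dt$ factors: I have to verify that in the proof of Theorem \ref{Thm3} those factors appear only inside pairings that are eventually integrated against Lebesgue $dt$, so that the Dirac substitution followed by a time-shift produces a well-defined pointwise expression rather than a spurious product of distributions. Once this consistency check is performed, the corollary follows directly from Theorem \ref{Thm3} without any additional analytic input.
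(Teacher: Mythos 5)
Your proposal is correct and follows the route the paper intends: the corollary is stated without a separate proof precisely because it is the specialization of Theorem \ref{Thm3} to $\alpha(dt)=\delta_{-\delta}$, with the duality identity (\ref{MBD14}) re-derived for the Dirac mass and a time shift $t=s-\delta$ turning the delayed pairing into the advanced conditional-expectation term, and with $p^v\equiv 0$ on $(T,T+\delta]$ encoding the cutoff $\chi_{[0,T]}$. Your explicit handling of the formal symbol $\frac{\alpha(dt)}{dt}$ (which is not a genuine Radon--Nikodym derivative in the singular case) is more careful than the paper's presentation, and your argument ordering $H_{v_\delta}(t+\delta,\Theta(t+\delta),u(t+\delta),u(t),p(t+\delta))$ is the consistent one.
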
Now we consider the special case wherein the control variable involves no delay, to derive the corresponding maximum condition, we first introduce the following condition.

\textbf{(H3.3)} For each $v(\cdot)\in\mathcal{U}$, $l(\cdot,y^v(\cdot),y^v_{\delta}(\cdot),z^v(\cdot),z^v_{\delta}(\cdot),v(\cdot))\in L^1_{\mathbb{F}}(0,T;\mathbb{R})$, $l$ is differentiable on $(y,y_{\delta},z,z_{\delta})$ and $\gamma$ is differentiable with respect to $y,$ all  derivatives are bounded.

We have the following result:\begin{thm}(The case without control delay)\label{Thm4}
In case there has no control delay, that is, $$f=f(\cdot,y^v(\cdot),y^v_{\delta}(\cdot),z^v(\cdot),z^v_{\delta}(\cdot),v(\cdot)), \\\quad l=l(\cdot,y^v(\cdot),y^v_{\delta}(\cdot),z^v(\cdot),z^v_{\delta}(\cdot),v(\cdot)).$$ Suppose $u(\cdot)\in\mathcal{U}$, $(y(\cdot),z(t))$ is its corresponding trajectory and $p(\cdot)$ the corresponding solution of adjoint equation (\ref{MBD7}). Let (H3.1), (H3.3) and the following condition holds true:\begin{equation}\label{MBD8}
 H(t,\Theta(t),u(t),p(t))= \max_{v\in U}H(t,\Theta(t),v,p(t)),\,\,\textrm{for all}\,t\in[0,T],
\end{equation}with $\Theta(t)=(y(t),y_{\delta}(t),z(t),z_{\delta}(t)),$ moreover, suppose for each $(t,y,y_{\delta},z,z_{\delta})\in[0,T]\times\mathbb{R}^n\times\mathbb{R}^n\times\mathbb{R}^{n\times d}\times\mathbb{R}^{n\times d},$ $\hat{H}(t,y,y_{\delta},z,z_{\delta})=\max_{v\in U}H(t,y,y_{\delta},z,z_{\delta},v,p(t))$ is a concave function of $(y,y_{\delta},z,z_{\delta})$, and $\gamma$ is concave in $y$, then $u(\cdot)$ is an optimal control.
\end{thm}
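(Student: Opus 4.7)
The plan is to mimic the proof of Theorem \ref{Thm3}, but to replace the Mangasarian-type concavity of $H$ in $(y,y_\delta,z,z_\delta,v,v_\delta)$ by an Arrow-type argument based on concavity of the maximised Hamiltonian
\[
\hat H(t,y,y_\delta,z,z_\delta)=\max_{v\in U}H(t,y,y_\delta,z,z_\delta,v,p(t)).
\]
Pick any $v(\cdot)\in\mathcal U$ and write $J(u(\cdot))-J(v(\cdot))=I+II$ with $I$ the running-cost difference and $II=\gamma(y(0))-\gamma(y^v(0))$. Concavity of $\gamma$ gives $II\ge\gamma_y(y(0))^\top(y(0)-y^v(0))=-p(0)^\top(y(0)-y^v(0))$, and applying It\^o's formula to $\langle p(t),y(t)-y^v(t)\rangle$ on $[0,T]$ using the adjoint ASDE (\ref{MBD7}) (with no $v_\delta$ present, but still carrying the $H_y,H_{y_\delta},H_z,H_{z_\delta}$ drift and diffusion) gives an identity for $p(0)^\top(y(0)-y^v(0))$ of the same form as (\ref{MBD11}), containing $\langle p,f(\Theta,u)-f(\Theta^v,v)\rangle$ together with the four inner-product terms $\langle H_y+\mathbb E^{\mathcal F_t}[\int_t^{t+\delta}H_{y_\delta}\phi\chi_{[0,T]}ds]\tfrac{\alpha(dt)}{dt},\, y-y^v\rangle$ and its $z$-analogue.

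For the running-cost term I would use $l=H+\langle f,p\rangle$ to split
\begin{equation*}
\begin{aligned}
I=\mathbb E\int_0^T\!\bigl[H(t,\Theta(t),u(t),p(t))&-H(t,\Theta^v(t),v(t),p(t))\bigr]dt\\
&+\mathbb E\int_0^T\langle p(t),\,f(t,\Theta(t),u(t))-f(t,\Theta^v(t),v(t))\rangle\,dt.
\end{aligned}
\end{equation*}
By the maximum condition (\ref{MBD8}) the first $H$-term equals $\hat H(t,\Theta(t))$, while by definition $H(t,\Theta^v(t),v(t),p(t))\le\hat H(t,\Theta^v(t))$, so the first integrand is $\ge\hat H(t,\Theta(t))-\hat H(t,\Theta^v(t))$. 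Concavity of $\hat H$ in $(y,y_\delta,z,z_\delta)$ together with the envelope identification (explained below) then yields
\[
\hat H(t,\Theta(t))-\hat H(t,\Theta^v(t))\ge\langle H_y,y-y^v\rangle+\langle H_{y_\delta},y_\delta-y^v_\delta\rangle+\langle H_z,z-z^v\rangle+\langle H_{z_\delta},z_\delta-z^v_\delta\rangle,
\]
all partial derivatives being evaluated at $(t,\Theta(t),u(t),p(t))$. Converting the $H_{y_\delta}$ and $H_{z_\delta}$ contributions into the Radon--Nikodym/conditional-expectation form by the Fubini swap of (\ref{MBD14}), the resulting terms are exactly those produced by the It\^o computation, and they cancel together with the $\langle p,f(\Theta,u)-f(\Theta^v,v)\rangle$ term, leaving $J(u)-J(v)\ge 0$.

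The step I expect to be most delicate is the envelope identification making $(H_y,H_{y_\delta},H_z,H_{z_\delta})(t,\Theta(t),u(t),p(t))$ a super-gradient of the concave function $\hat H(t,\cdot)$ at $\Theta(t)$. The clean way is: fix $t$ and set $G(\Theta'):=H(t,\Theta',u(t),p(t))$; then $G\le\hat H(t,\cdot)$ by definition of $\hat H$, and $G(\Theta(t))=\hat H(t,\Theta(t))$ by (\ref{MBD8}). Hence $\hat H(t,\cdot)-G$ attains its minimum $0$ at $\Theta(t)$, so $\nabla G(\Theta(t))$ belongs to the super-differential of the concave function $\hat H(t,\cdot)$ at $\Theta(t)$, which is precisely the inequality used above. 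Once this Arrow-type envelope step is granted, the remainder of the argument is the same bookkeeping as in Theorem \ref{Thm3}.
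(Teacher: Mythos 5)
Your proposal is correct and takes essentially the same route as the paper: the Arrow-type argument via the maximized Hamiltonian $\hat H$, the supergradient inequality from concavity, and the envelope identification of $(H_y,H_{y_\delta},H_z,H_{z_\delta})(t,\Theta(t),u(t),p(t))$ with that supergradient (the paper phrases this via an auxiliary function $\Gamma$ attaining its maximum at $\Theta(t)$, which is the same first-order argument you give), followed by the same It\^o/Fubini bookkeeping as in Theorem \ref{Thm3}. Your explicit mention of the Fubini swap converting the $H_{y_\delta}$, $H_{z_\delta}$ terms into the Radon--Nikodym/conditional-expectation form is a detail the paper leaves implicit, but it is the right step.
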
\begin{proof}Similar to the proof of Theorem \ref{Thm3}, we also choose arbitrary $v(\cdot)\in \mathcal{U}$, and aim to prove $J(u(\cdot))-J(v(\cdot))\geq 0.$ From the procedure of Theorem \ref{Thm3}, we can see that\begin{equation}\label{MBD16}
\begin{aligned}
&\ J(u(\cdot)-J(v(\cdot))\\
\geq&\ \mathbb{E}\int^T_0[H(t,\Theta(t),u(t),p(t))-H(t,\Theta^v(t),v(t),p(t))]dt\\
&\ -\mathbb{E}\int^T_0\langle H_y(t,\Theta(t),u(t),p(t))\\
&\ +\mathbb{E}^{\mathcal{F}_t}[\int^{t+\delta}_tH_{y_{\delta}}(s,\Theta(s),u(s),p(s))\phi(s,t)\chi_{[0,T]}(s)ds]
\frac{\alpha(dt)}{dt},y(t)-y^v(t)\rangle dt\\
&- \mathbb{E}\int^T_0\langle H_z(t,\Theta(t),u(t),p(t))\\
&\ +\mathbb{E}^{\mathcal{F}_t}[\int^{t+\delta}_tH_{z_{\delta}}(s,\Theta(s),u(s),p(s))\phi(s,t)\chi_{[0,T]}(s)ds]
\frac{\alpha(dt)}{dt},z(t)-z^v(t)\rangle dt.\\
\end{aligned}
\end{equation}By the condition (\ref{MBD8}) and the definition of $\hat{H},$
\begin{equation}\label{MBD17}
\begin{aligned}
&\ H(t,\Theta(t),u(t),p(t))-H(t,y,y_{\delta},z,z_{\delta},v,p(t))\\
\geq&\ \hat{H}(t,\Theta(t))-\hat{H}(t,y,y_{\delta},z,z_{\delta}).
\end{aligned}\end{equation}Since $(y,y_{\delta},z,z_{\delta})\rightarrow\hat{H}(t,y,y_{\delta},z,z_{\delta})$ is concave for any given $t\in[0,T]$, it follows that there exists a supergradient $a_1(t),a_2(t)\in\mathbb{R}^n$ and $b_1(t),b_2(t)\in\mathbb{R}^{n\times d}$ for $\hat{H}(t,y,y_{\delta},z,z_{\delta})$ at $(y,y_{\delta},z,z_{\delta})$ (refer Chapter 5, Section 23 in \cite{R}), that is, for all $(y,y_{\delta},z,z_{\delta}),$\begin{equation}\label{MBD18}
\begin{aligned}
&\ \hat{H}(t,y,y_{\delta},z,z_{\delta})-\hat{H}(t,\Theta(t))\\
\leq&\ \langle a_1(t),y-y(t)\rangle+\langle a_2(t),y_{\delta}-y_{\delta}(t)\rangle+\langle b_1(t),z-z(t)\rangle+\langle b_2(t),z_{\delta}-z_{\delta}(t)\rangle,
\end{aligned}\end{equation}Define
\begin{equation*}
\begin{aligned}
\Gamma(t,y,y_{\delta},z,z_{\delta})=&\ H(t,y,y_{\delta},z,z_{\delta},u(t),p(t))-H(t,\Theta(t),u(t),p(t))\\
&\ -\langle a_1(t),y-y(t)\rangle-\langle a_2(t),y_{\delta}-y_{\delta}(t)\rangle\\
&\ -\langle b_1(t),z-z(t)\rangle-\langle b_2(t),z_{\delta}-z_{\delta}(t)\rangle.
\end{aligned}\end{equation*}Obviously, $\Gamma(t,y,y_{\delta},z,z_{\delta})\leq0$ for all $(y,y_{\delta},z,z_{\delta})$ and $\Gamma(t,\Theta(t))=0$. It follows that $\Gamma$ attains its maximum value at $(y(t),y_{\delta}(t),z(t),z_{\delta}(t))$. Consequently we have
$$\begin{aligned}
\Gamma_y(t,\Theta(t))=0,\,\,\Gamma_{y_{\delta}}(t,\Theta(t))=0,\\
\Gamma_z(t,\Theta(t))=0,\,\,\Gamma_{z_{\delta}}(t,\Theta(t))=0.\\
\end{aligned}$$These will lead to$$\begin{aligned}
H_y(t,\Theta(t),u(t),p(t))=a_1(t),\,\,H_{y_{\delta}}(t,\Theta(t),u(t),p(t))=a_2(t),\\
H_z(t,\Theta(t),u(t),p(t))=b_1(t),\,\,H_{z_{\delta}}(t,\Theta(t),u(t),p(t))=b_2(t).\\
\end{aligned}$$Combine (\ref{MBD18}) and note the arbitrariness of $(y,y_{\delta},z,z_{\delta})$, we have
\begin{equation*}\begin{aligned}&\ \hat{H}(t,\Theta^v(t))-\hat{H}(t,\Theta(t))\\
\leq&\ \langle H_y(t,\Theta(t),u(t),p(t)),y^v(t)-y(t)\rangle+\langle H_{y_{\delta}}(t,\Theta(t),u(t),p(t)),y^v_{\delta}(t)-y_{\delta}(t)\rangle\\
&\ +\langle H_z(t,\Theta(t),u(t),p(t)),z^v(t)-z(t)\rangle+\langle H_{z_{\delta}}(t,\Theta(t),u(t),p(t)),z^v_{\delta}(t)-z_{\delta}(t)\rangle.
\end{aligned}\end{equation*}Substitute the above result into (\ref{MBD16}), we obtain $J(u(\cdot))-J(v(\cdot))\geq0.$
\end{proof}

\section{Application I: Dynamic optimization of recursive utility with moving average}
In this section, we investigate Example $1.1$: the dynamic optimization of recursive utility with moving average, which is already given in Section $1$. The state equation satisfies the following dynamics:\begin{equation}\label{DMP4}
y(t)=\xi-\int_t^{T}\left[\alpha c(s)+\beta \left(\frac{1}{s}\int_0^{s}y(u)du\right)\right]ds-\int_t^{T}z(s)dW_s
\end{equation}where $\alpha,\beta>0$ are some constants, and the control variable is consumption process $c(\cdot).$ The class of admissible controls is denoted by $\mathcal{C}=\{c(\cdot)\in L^2_{\mathbb{F}}(0,T;\mathbb{R}),t\in[0,T]\}$. Given some standard utility function $U$, for example,  $U(x)=\frac{x^{R}}{R}$ for $0<R<1,$ we can consider the following dynamic optimization problem: $$\inf_{c (\cdot) \in \mathcal{C}}J(c(\cdot))$$where the objective functional is given by $$J(c(\cdot))=-\mathbb{E}[\int^T_0U(c(t))dt]+y^{c}(0)$$which follows Delong \cite{d1}. The state of Eq.(\ref{DMP4}) can be reformulated as$$y(t)=\xi-\int_t^{T}\left[\alpha c(s)+\beta \left(\frac{1}{s}\int^s_{s-T}Ty(u)\chi_{\{u\geq0\}}\alpha(du)\right)\right]ds-\int_t^{T}z(s)dW_s$$where $\alpha$ is the uniform measure. Introduce the Hamiltonian function$$H(t,y(t),y_{\delta}(t),z(t),c(t),p(t))=-U(c(t))+\left[\alpha c(t)+\beta \left(\frac{1}{t}\int^t_{t-T}Ty(u)\chi_{\{u\geq0\}}(u)\alpha(du)\right)\right]p(t).$$
The associated adjoint equation satifies\begin{equation}\label{MBD25}\left\{\begin{aligned}
dp(t)=&\ \left(\int^T_t\beta p(s)\frac{1}{s}ds\right)dt,\,\,t\in[0,T],\\
p(0)=&\ 1.\end{aligned}\right.\end{equation}It follows Eq.(\ref{MBD25}) can be reduced to the following ordinary differential equation:$$\dot{p}(t)=\int^T_t\beta p(s)\frac{1}{s}ds,\,\,\ddot{p}(t)=-\frac{\beta}{t}p(t)$$which is solvable and by Theorem \ref{Thm3}, we have the following result.\begin{prop}The optimal consumption is given by $c(t)=(\alpha p(t))^{\frac{1}{R-1}}$, where $p(t)$ satisfies Eq.(\ref{MBD25}).\end{prop}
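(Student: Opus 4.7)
The plan is to recognize this example as the special case of the general framework of Section~3 in which the control $c$ appears without delay, so that Theorem~\ref{Thm4} applies directly (after the routine reformulation of $\inf_c J$ as $\sup_c (-J)$, which merely flips the signs of $l$ and $\gamma$). Reading off (\ref{DMP4}), the generator is $f(t,y,y_\delta,z,c)=-\alpha c-\beta y_\delta$ with
\[
y_\delta(t)=\int_{t-T}^{t}\tfrac{T}{t}\,y(s)\chi_{\{s\geq 0\}}\,\alpha(ds),
\]
$\alpha$ being the uniform measure on $[t-T,t]$, while the running and terminal costs are $l(t,c)=-U(c)$ and $\gamma(y(0))=y(0)$. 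Assumptions (H3.1) and (H3.3) are verified directly, since $f$ is linear in its arguments and $U$ is smooth on $(0,\infty)$.

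Next I would write down the Hamiltonian exactly as in the excerpt, $H=-U(c)+(\alpha c+\beta y_\delta)p$, and compute its derivatives: $H_y=H_z=H_{z_\delta}=0$ and $H_{y_\delta}=\beta p$. Because $H_z=H_{z_\delta}=0$, the diffusion coefficient of the adjoint ASDE (\ref{MBD7}) vanishes, so $p(\cdot)$ is in fact deterministic and the conditional expectation $\mathbb{E}^{\mathcal{F}_t}[\,\cdot\,]$ drops out. Substituting $\phi(s,t)=T/s$ and $\alpha(dt)/dt=1/T$ collapses (\ref{MBD7}) to precisely Eq.~(\ref{MBD25}). Differentiating once in $t$ gives $\dot p(t)=\int_t^T\beta p(s)/s\,ds$, and a further differentiation yields the singular second-order AODE $\ddot p(t)=-\beta p(t)/t$ on $(0,T]$, with boundary data $p(0)=1$ and $\dot p(T)=0$; standard ODE arguments then furnish a well-defined deterministic $p(\cdot)$.

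Then I would invoke the optimality condition of Theorem~\ref{Thm4}. Because there is no control delay, the maximum condition (\ref{MBD8}) reduces to pointwise stationarity in $c$ alone; differentiating yields $H_c=-U'(c)+\alpha p=0$, and with $U(x)=x^R/R$ this gives the candidate $c^*(t)=(\alpha p(t))^{1/(R-1)}$, which is well-defined provided $p(t)>0$ on $[0,T]$.

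To close, I would check the concavity hypotheses. Since $H$ is affine in each of $y,y_\delta,z,z_\delta$, the profile $\hat H(t,y,y_\delta,z,z_\delta)=\sup_v H$ inherits this affinity and is in particular concave; and $\gamma(y)=y$ is affine, hence concave. Theorem~\ref{Thm4} therefore applies and delivers the optimality of $c^*$. The main obstacle I foresee is not the Hamiltonian calculus but the analytic handling of the singular AODE for $p$ at $t=0$: namely, verifying existence, uniqueness and strict positivity of $p$ on $[0,T]$ with the prescribed data, so that $(\alpha p(t))^{1/(R-1)}$ is a well-defined, $L^2_{\mathbb{F}}$-admissible consumption rate, and also making sure the sign conventions between the minimization problem, the stated Hamiltonian, and the adjoint ASDE are consistently reconciled.
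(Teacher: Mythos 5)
Your route is essentially the paper's: the paper's entire argument for this proposition consists of writing the Hamiltonian $H=-U(c)+[\alpha c+\beta y_{\delta}]p$, the adjoint equation (\ref{MBD25}), its reduction to $\ddot p(t)=-\beta p(t)/t$, and an appeal to the sufficient maximum principle (the paper cites Theorem \ref{Thm3}; you use Theorem \ref{Thm4}, which is the same result specialized to the no-control-delay case and is the more natural choice here). Your identification of $f$, $l$, $\gamma$, the observation that the adjoint is deterministic, and the boundary data $p(0)=1$, $\dot p(T)=0$ for the second-order AODE are all consistent with, and more explicit than, what the paper provides. The formula $c=(\alpha p)^{1/(R-1)}$ indeed comes from the stationarity condition $H_c=-U'(c)+\alpha p=0$.

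There is, however, a step in your write-up that fails as stated, and it is exactly the sign reconciliation you flag at the end and then set aside. With the Hamiltonian as written, $H$ is \emph{not} affine in $c$: since $U(c)=c^{R}/R$ with $0<R<1$ is strictly concave, $-U(c)$ is strictly convex, so $c\mapsto H(t,\cdot)$ is strictly convex and, whenever $p(t)>0$, tends to $+\infty$ as $c\to\infty$ (the linear term $\alpha c\,p$ dominates the sublinear term $-c^{R}/R$). Consequently the stationary point $c=(\alpha p)^{1/(R-1)}$ is a \emph{minimizer} of $H$ in $c$, the maximum condition (\ref{MBD8}) cannot hold, and $\hat H=\sup_{c}H\equiv+\infty$, so the assertion that ``$\hat H$ inherits this affinity and is in particular concave'' is vacuous: the hypotheses of Theorem \ref{Thm4} (equivalently \ref{Thm3}) are simply not met under the stated conventions. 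The same difficulty infects the passage from (\ref{MBD7}) to (\ref{MBD25}): with $H_{y_\delta}=\beta p$, $\phi(s,t)=T/s$ and $\alpha(dt)/dt=1/T$, the drift produced by (\ref{MBD7}) is $-\int_t^{T}\beta p(s)s^{-1}ds$, i.e.\ the negative of what (\ref{MBD25}) displays. To close the argument one must consistently convert the minimization $\inf_c J$ into the maximization framework of Section 3 (which changes the signs of $l$, $\gamma$, hence of $p$ and of $H$), after which the Hamiltonian is concave in $c$ and the first-order condition genuinely identifies the maximizer; the final formula survives this bookkeeping, but the verification you present does not. To be fair, the paper performs none of this verification either, so your proposal is no weaker than the source --- but the concavity/maximum check cannot be declared done as you have written it.
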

\section{Application II: Dynamic optimization of pension fund with delayed surplus}
In this section, let us turn to study Example $1.2$ in Section $1$. We will use the results obtained in Section $3$ to derive the optimal control. For simplicity, suppose $g(\cdot)$ is some linear function as follows$$g(y(t)-\kappa y(t-\delta))=\alpha y(t)-\alpha \kappa y(t-\delta)$$where $\alpha, \beta>0$. Then our model can be rewritten as\begin{equation}\label{MBD20}
\left\{\begin{aligned}
dy(t)=&\ \{(r-\alpha)y(t)+\lambda z(t)+\alpha \kappa y(t-\delta)-c(t)\}dt+z(t)dW(t),\,t\in[0,T],\\
y(t)=&\ 0,\, \quad \quad t\in[-\delta,0),\\
y(T)=&\ \xi.
\end{aligned}\right.
\end{equation}Denote the admissible control set by $\mathcal{C}=\{c(\cdot)\in L^2_{\mathbb{F}}(0,T;\mathbb{R}),t\in[0,T]\}.$ It follows that if $\delta$ is sufficiently small, then Eq. (\ref{MBD20}) admits a unique solution pair $(y(\cdot),z(\cdot))$. Introduce the objective functional of the fund manager as follows\begin{equation}J(c(\cdot))=\mathbb{E}\big[\int^T_0Le^{-\rho t}\frac{(c(t))^{1-R}}{1-R}dt\big]-Kx(0),\end{equation}with $L$ and $K$ are positive constants, $\rho$ is a discount factor, and $R\in(0,1)$ is index of risk aversion. The manager aims to maximize the expected objective functional by  taking account both the cumulative consumption and initial reserve requitement. The optimal control problem is to maximize $J(c(\cdot))$ over $\mathcal{C}$. The Hamiltonian function is given by$$H(t, y(t), y(t-\delta), c(t), p(t))=Le^{-\rho t}\frac{(c(t))^{1-R}}{1-R}+\{(r-\alpha)y(t)+\lambda z(t)+\alpha \kappa y(t-\delta)-c(t)\}p(t).$$The adjoint equation is\begin{equation}\label{MBD21}
\left\{\begin{aligned}
dp(t)=&\ \{(\alpha-r)p(t)-\alpha \kappa\mathbb{E}^{\mathcal{F}_t}[p(t+\delta)]\}dt-\lambda p(t)dW(t),\,t\in[0,T],\\
p(0)=&\ K,\\
p(t)=&\ 0,\,\,t\in(T,T+\delta].
\end{aligned}
\right.
\end{equation}Then from Corollary \ref{Cor1}, we have the following result.\begin{prop}If $p(t)$ is the solution of ASDE (\ref{MBD21}), then the optimal consumption is given by $c(t)=\Big(\frac{p(t)e^{\rho t}}{L}\Big)^{-\frac{1}{R}}$ and the optimal fund proportion in risky asset is $\theta(t)=\frac{z(t)}{\sigma y(t)}$ where $(y(t),z(t))$ satisfies (\ref{MBD20}).
\end{prop}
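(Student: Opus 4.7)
The plan is to apply the sufficient maximum principle of Theorem \ref{Thm4} (the case without control delay), specialized to pointwise delay via $\alpha(\cdot)=\delta_{-\delta}$ and $\phi\equiv 1$. Matching (\ref{MBD20}) with the abstract form (\ref{MBD6}) identifies the driver $f(t,y,y_\delta,z,c)=-\{(r-\alpha)y+\lambda z+\alpha\kappa y_\delta-c\}$, the running reward $l(t,c)=Le^{-\rho t}c^{1-R}/(1-R)$, and $\gamma(y)=-Ky$, so that $p(0)=-\gamma_y(y(0))=K$ is consistent with the initial condition of (\ref{MBD21}). Existence of a unique $\mathbb{F}$-adapted $p(\cdot)$ solving the adjoint ASDE follows from Theorem \ref{Thm2} for $\delta$ sufficiently small, and the control $c$ enters only instantaneously, so we are indeed in the framework of Theorem \ref{Thm4}.

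Next I would compute the pointwise maximizer of $H$ in $c$. Because $c\mapsto c^{1-R}/(1-R)$ is strictly concave on $(0,\infty)$ for $R\in(0,1)$ and the drift depends on $c$ linearly, the Hamiltonian is strictly concave in $c$; the first-order condition $\partial_c H=Le^{-\rho t}c^{-R}-p(t)=0$ yields the unique interior maximizer $u(t)=\bigl(p(t)e^{\rho t}/L\bigr)^{-1/R}$, which is the claimed optimal consumption.

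It remains to check the concavity hypothesis of Theorem \ref{Thm4}. The key observation is that $H$ is \emph{affine} in $(y,y_\delta,z,z_\delta)$ for every fixed $c$ (and in fact independent of $z_\delta$), while the pointwise maximizer $c^{\ast}$ above depends only on $(t,p(t))$ and not on $(y,y_\delta,z,z_\delta)$; consequently
$$\hat H(t,y,y_\delta,z,z_\delta)=\max_{c}H(t,y,y_\delta,z,z_\delta,c,p(t))$$
is itself affine in $(y,y_\delta,z,z_\delta)$, hence concave, and $\gamma(y)=-Ky$ is trivially concave in $y$. Theorem \ref{Thm4} then implies that $u(\cdot)$ is optimal, and the corresponding trajectory $(y(\cdot),z(\cdot))$ solves (\ref{MBD20}) with $c=u$. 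The formula for the optimal risky-asset proportion is obtained by inverting the substitution $z(t)=\sigma\theta(t)y(t)$ used to reduce (\ref{MBD1}) to (\ref{MBD20}) in Example 1.2, giving $\theta(t)=z(t)/(\sigma y(t))$.

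The main technical obstacle I anticipate is verifying that $p(t)>0$ almost surely (so that $u(t)$ is well-defined and positive) and that $y(t)\neq 0$ (so that $\theta(t)$ is well-defined). Positivity of $p$ can be read off from the linear structure of (\ref{MBD21}) together with $p(0)=K>0$, since a linear ASDE with positive initial datum and Lipschitz coefficients preserves sign in a natural way; nondegeneracy of $y$ has to be imposed through the admissibility class and the economic interpretation of the pension-fund wealth, which rules out bankruptcy prior to $T$.
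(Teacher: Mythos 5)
Your proof is correct and follows the same basic strategy as the paper: verify the hypotheses of the sufficient stochastic maximum principle of Section 3 for this linear--concave problem and read the optimal consumption off the first-order condition in $c$. The only divergence is which version of the principle you invoke. The paper appeals to Corollary \ref{Cor1} (the pointwise-delay specialization of Theorem \ref{Thm3}, which asks for $H$ jointly concave in $(y,y_{\delta},z,z_{\delta},v,v_{\delta})$ and whose control-delay term $H_{v_{\delta}}$ simply vanishes here), whereas you use the Arrow-type Theorem \ref{Thm4}; since $H$ is affine in the state variables and strictly concave in $c$, both sets of hypotheses are satisfied and the two routes reduce to the same maximum condition $Le^{-\rho t}c^{-R}=p(t)$, hence $c(t)=\bigl(p(t)e^{\rho t}/L\bigr)^{-1/R}$. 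Your closing concern that $p(t)>0$ is needed for this maximizer to be interior and finite (the Hamiltonian has no maximum in $c$ if $p\leq 0$) is a point the paper does not address at the level of the proposition itself, but it is settled immediately afterwards when (\ref{MBD21}) is solved explicitly as $p(t)=Ke^{ht}M(t)$ with $K>0$ and $M$ a positive exponential martingale; the nondegeneracy of $y$ required for $\theta(t)=z(t)/(\sigma y(t))$ to make sense is, as you say, a modeling assumption inherited from the substitution $z=\sigma\theta y$ in Example 1.2 rather than an output of the maximum principle.
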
In the following, we aim to get the explicit solution of ASDE (\ref{MBD21}). To this end, we first set$$M(t)=e^{\int^t_0-\lambda dW(s)-\frac{1}{2}\int^t_0\lambda^2ds},\,t\in[0,T+\delta].$$It follows that $M(t)$ is an exponential martingale and satisfies$$dM(t)=-\lambda M(t)dW(t).$$Let $p(t)=q(t)M(t)$, where $q(t)$ is a deterministic function defined on $[0,T+\delta]$, then apply It\^{o} formula to $p(t)$, we have\begin{equation}\label{MBD22}dp(t)=q'(t)M(t)dt-\lambda q(t)M(t)dW(t),\,t\in[0,T].
\end{equation}On the other hand, substituting $p(t)=q(t)M(t)$ into Eq.(\ref{MBD21}), we have
\begin{equation}\label{MBD23}
\begin{aligned}dp(t)=&\ \{(\alpha-r)q(t)M(t)-\alpha\kappa q(t+\delta)\mathbb{E}^{\mathcal{F}_t}[M(t+\delta)]\}dt-\lambda q(t)M(t)dW(t)\\
=&\ \{(\alpha-r)q(t)M(t)-\alpha\kappa q(t+\delta)M(t)\}dt-\lambda q(t)M(t)dW(t),\,\,t\in[0,T]
\end{aligned}\end{equation}Comparing (\ref{MBD22}) and (\ref{MBD23}), if the following AODE has a solution\begin{equation}\label{MBD24}
\left\{\begin{aligned}
q'(t)=&\ (\alpha-r)q(t)-\alpha\kappa q(t+\delta),\,\,t\in[0,T],\\
q(0)=&\ K,\\
q(t)=&\ 0,\,\,t\in (T,T+\delta],
\end{aligned}\right.
\end{equation}then $p(t)=q(t)M(t)$ is a solution of ASDE (\ref{MBD21}). The solution of AODE $(\ref{MBD24})$ can be obtained via the characteristic function as follows: $q(t)=Ke^{ht}$ for $t \in [0, T],$ and $q(t)=0$ for $t \in (T, T+\delta].$ Here, $h$ satisfies the following characteristic equation:$$h+\alpha \kappa e^{h\delta}=(\alpha-r).$$Note that $\alpha, r, \kappa$ are the parameter of the state equation, so the above characteristic equation has solution $h$ if the the delayed parameter $\delta$ is small enough. In fact, denote $F(h)=h+\alpha\kappa e^{h\delta},$ then it follows that $\lim_{h\longrightarrow +\infty}F(h)=+\infty.$ In addition,  $F'(h)>0$ so $F(h)$ is an increasing function of $h,$ so there exists unique $h$ such that $F(h)=(\alpha-r),$ thus $q(t)$ as well as $p(t)$ are uniquely determined. One remark to the parameter range. Set $L=\max \{|\alpha-r|, \alpha\kappa,\lambda \},$ then we have the following parameter range to well-poseness of BSDE (\ref{MBD20}) and ASDE (\ref{MBD21}):\begin{equation}\left\{\begin{aligned}
&\ 6L^{2}\delta (1+2\delta^{2}e)<1,\\
&\ 4L^{2}\delta(1+\delta^{2}e)+\delta<1.\\
\end{aligned}\right.
\end{equation}\section{Application III: The dynamic optimization of linear delayed system}
Here, we revisit Example $1.3$ of the backward system with time-delayed generator. The state equation is
given by$$y(t)=\xi-\int_t^{T}\left[\beta_1 y(s)+\beta_2 y(s-\delta)+\gamma_1 z(s)+\gamma_2 z(s-\delta)+\alpha v(s)\right]ds-\int_t^{T}z(s)dW_s$$where $\alpha, \beta_1,\beta_2,\gamma_1,\gamma_2$ are some constants, $v(\cdot)$ is the control process, and the class of admissible controls is denoted by $\mathcal{U}_{ad}=\{v(\cdot)\in L^2_{\mathbb{F}}(0,T;\mathbb{R}),t\in[0,T]\}.$ The dynamic optimization problem is as follows: $$\inf_{v \in \mathcal{U}_{ad}}J(v(\cdot))$$where the objective functional is given by $$J(v(\cdot))=\frac{1}{2}\mathbb{E}\left[\int_0^{T}R(t)v^{2}(t)dt\right]+Ky(0)$$for some constant $K$ and nonnegative function $R(t)$ defined on $[0,T]$. By Corollary \ref{Cor1}, the Hamiltonian function of our optimization problem becomes$$\begin{aligned}
&\ H(t,y(t),y(t-\delta),z(t),z(t-\delta),v(t))\\
=&\ -\frac{1}{2}R(t)v^2(t)+\left(\alpha v(t)+\beta_1y(t)+\beta_2y(t-\delta)+\gamma_1z(t)+\gamma_2z(t-\delta)\right)p(t),
\end{aligned}$$and the adjoint equation becomes\begin{equation}\label{MBDA1}
\left\{\begin{aligned}
dp(t)=&\ \left(-\beta_1p(t)-\beta_2\mathbb{E}^{\mathcal{F}_t}[p(t+\delta)]\right)dt+
\left(-\gamma_1p(t)-\gamma_2\mathbb{E}^{\mathcal{F}_t}[p(t+\delta)]\right)dW(t),\,\,t\in[0,T],\\
p(0)=&\ K,\\
p(t)=&\ 0,\,\,t\in(T,T+\delta].
\end{aligned}\right.\end{equation}Similar to Application II, we can introduce the exponential martingale satisfying: $$dM(t)=\gamma M(t)dW(t)$$where $\gamma$ is some coefficient to be determined, and set $p(t)=q(t)M(t).$ Then we get the following AODE equation system:\begin{equation}
\left\{\begin{aligned}
q'(t)=&\ -\beta_1 q(t)-\beta_2 q(t+\delta),\\
\gamma q(t)=&\ \gamma_1 q(t)+\gamma_2 q(t+\delta).
\end{aligned}\right.\end{equation}The first equation $q(t)$ can be solved using the same method to Eq.(\ref{MBD24}). Based on it, we can plug $q(t)$ into second equation to get the value of $\gamma$ thus the exponential martingale $M(t)$ can be uniquely determined. Consequently, the optimal control is given by $u(t)=\frac{\alpha p(t)}{R(t)}$, where $p(t)=q(t)M(t)$ is the solution of the ASDE (\ref{MBDA1}).


\begin{thebibliography}{99}

\bibitem{alw} A. Augustynowicz, H. Leszczynski and W. Walter (2003). On some nonlinear ordinary differential equations with advanced arguments. \emph{Nonlinear Analysis}, 53, 495-505.

\bibitem{cw} L. Chen and Z. Wu (2010). Maximum principle for the stochastic optimal control problem with delay and
application. \emph{Automatica}, \textbf{46}, 1074-1080.

\bibitem{cwi} K. L. Cooke and J. Wiener (1987). An equation alternately to retarded and advanced type. \emph{Proceeding of the American Mathematical Society}, \textbf{99}, 726-732.

\bibitem{dz} N. Dokuchaev, X. Y. Zhou (1999). Stochastic control with terminal contingent conditions, \emph{J. Math. Anal. Appl.}, \textbf{238}, 143-165.

\bibitem{d1} {\L}. Delong (2011). BSDEs with time-delayed generators of a moving average type with applications to non-monotone preferences. To appear in \emph{Stochastic Models}.

\bibitem{d2} {\L}. Delong (2011). Applications of time-delayed backward stochastic differential equations to pricing, hedging and portfolio management. Working paper.

\bibitem{di} {\L}. Delong and P. Imkeller (2010). Backward stochastic differential equations with time delayed generators-results and counterexamples. \emph{Annals of Applied Probability}, \textbf{20}, 1512-1536.

\bibitem{f} S. Federico (2011). A stochastic control problem with delay arising in a pension fund model. To appear in \emph{Finance and Stochastics}.

\bibitem{hwg} A. J. Hall, G. C. Wake and P. W. Gandar (1991). Steady size distributions for cells in one dimensional plant issues. \emph{J. Math. Bio.}, \textbf{30}, 101-123.

\bibitem{jz} S. Ji and X.Y. Zhou (2006). A maximum principle for stochastic optimal control with terminal state constraints, and its applications.  \emph{Communications in Information and Systems}, \textbf{6}, 321-338.

\bibitem{l} B. Larssen (2002). Dynamic programming in stochastic control of systems with delay. \emph{Stochastics and Stochastics Reports}, \textbf{74}, 651-673.

\bibitem{lz} A. Lim and X.Y. Zhou (2001). Linear-quadratic control of backward stochastic differential equations. \emph{SIAM J. Control Optim.}, \textbf{40}, 450-474.

\bibitem{km} T. Kato and J. B. McLeod (1971). The functional-differential equation. \emph{Bull. Amer. Math. Soc.}, \textbf{77}, 891-937.

\bibitem{kz} M. Kohlmann and X.Y. Zhou (2000). Relationship between backward stochastic differential equations and stochastic controls: a linear-quadratic approach. \emph{SIAM J. Control Optim.}, \textbf{38}, 1392-1407.

\bibitem{my} J. Ma and J. Yong (1999). Forward-backward Stochastic Differential Equations and their
Applications, Lecture Notes in Math. \textbf{1702}, Springer-Verlag.

\bibitem{m1} S. E. A. Mohammed (1984). Stochastic Functional Differential Equations. Pitman Advanced Publishing Program.

\bibitem{m2} S. E. A. Mohammed (1996). Stochastic Differential Equations with Memory:
Theory, Examples and Applications. \emph{Stochastic Analysis and
Related Topics 6.} The Geido Workshop, Progress in Probability, Birkhauser.

\bibitem{or} R. J. Oberg (1969). On the local existence of solutions of certain functional differential equations.
\emph{Proc. Amer. Math. Sco.}, \textbf{20}, 285-302.

\bibitem{os} B. {\O}ksendal and A. Sulem (2001). A maximum principle for optimal
control of stochastic systems with delay, with applications to
finance. \emph{Optimal Control and Partial Differential Equations}, eds J. L. Menaldi, E. Rofman and A. Sulem, IOS Press, Amsterdam, 64-79.

\bibitem{po} G. P. Papavassilopoulos and G. J. Olsder (1984). On a linear differential equation of the advanced type.
\emph{J. Math. Anal. Appl.}, 103, 74-82.

\bibitem{kpq} N. El Karoui, S. Peng and M. C. Quenez (2001). A dynamic maximum principle for the optimization of recursive utilities under constrains. \emph{Annals of Applied Probability}, \textbf{11}, 664-693.

\bibitem{py} S. Peng and Z. Yang (2009). Anticipated backward stochastic differential
equation, \emph{Annals of Probability}, \textbf{37}, 877-902.

\bibitem{R} R. T. Rockafellar (1970). Convex Analysis. Princeton, NJ: Princeton Univ. Press.

\bibitem{y} T. Yoneda (2006). On the functional-differential equation of advanced type.  \emph{J. Math. Anal. Appl.}, \textbf{332}, 487-496.

\bibitem{yz} J. Yong and X. Y. Zhou (1999). Stochastic controls: Hamiltonian Systems and
HJB equations. Springer-Verlag, New York.
\end{thebibliography}
\end{document}